\newtheorem{thm}{Theorem}[section]
\newtheorem{lemma}[thm]{Lemma}
\newtheorem{cor}[thm]{Corollary}
\newtheorem{claim}[thm]{Claim}
\newtheorem{example}{Example}[section]
\newtheorem{defin}{Definition}[section]
\def\q{\hfill\rule{1ex}{1ex}}
\begin{document}
	\title{\bf Edge pancyclic Cayley graphs on symmetric group}
	
	\author[1]{Mengyu Cao}
	\author[2]{Mei Lu}
	\author[2]{Zequn Lv}
	\author[2]{Xiamiao Zhao\thanks{Corresponding author. E-mail: \texttt{zxm23@mails.tsinghua.edu.cn}}}
	
	\affil[1]{\small Institute for Mathematical Sciences, Renmin University of China, Beijing 100086, China}
	\affil[2]{\small Department of Mathematical Sciences, Tsinghua University, Beijing 100084, China}

	\date{}

	\maketitle\baselineskip 16.3pt
	
	\begin{abstract}

	We study the derangement graph $\Gamma_n$ whose vertex set consists of all permutations of $\{1,\ldots,n\}$, where two vertices are adjacent if and only if their corresponding permutations differ at every position. It is well-known that $\Gamma_n$ is a Cayley graph, Hamiltonian and Hamilton-connected. In this paper, we prove that for $n \geq 4$, the derangement graph $\Gamma_n$ is edge pancyclic. Moreover, we extend this result to two broader classes of Cayley graphs defined on symmetric group.
		
		\end {abstract}
		
		{\bf Key words} Edge pancyclic; Derangement graph; Cayley graph\vskip.3cm
		
		\section{Introduction}
		
		Let \(\Gamma = (V, E)\) be a graph. For any subset \(S \subseteq V\), \(\Gamma[S]\) denotes the subgraph of \(\Gamma\) induced by \(S\). For each \(v \in V(\Gamma)\), let \(N(v) = \{w \mid vw \in E(\Gamma)\}\) be the neighborhood of \(v\), and let \(d(v) = |N(v)|\) denote the degree of \(v\). Let \(\delta = \delta(\Gamma)\) represent the minimum degree of \(\Gamma\). A matching of size \(s\) in \(\Gamma\) is a set of \(s\) pairwise disjoint edges, and if it covers all vertices of \(\Gamma\), it is called a \emph{perfect matching}. 
		
		For a graph \(\Gamma\) with order \(n \geq 3\), we say that \(\Gamma\) is \emph{Hamiltonian} if it contains a cycle that spans all the vertices in \(V(\Gamma)\). We say that \(\Gamma\) is \emph{pancyclic} if it contains a cycle of every length from 3 to \(n\). The graph \(\Gamma\) is \emph{vertex pancyclic} (resp., \emph{edge pancyclic}) if every vertex (resp., edge) lies on a cycle of each length from 3 to \(n\). Clearly, if \(\Gamma\) is edge pancyclic, it is also vertex pancyclic; if \(\Gamma\) is vertex pancyclic, it is pancyclic; and if \(\Gamma\) is pancyclic, it is Hamiltonian.
		
		Let \(G\) be a finite group, and let \(S\) be an inverse-closed subset of \(G\) with \(1 \notin S\). The \emph{Cayley graph} \(\Gamma(G, S)\) is the graph with elements of \(G\) as vertices, where two vertices \(u, v \in G\) are connected by an edge if and only if \(v = su\) for some \(s \in S\). \(\Gamma(G, S)\) is connected if and only if \(S\) is a set of generators for \(G\), and it is vertex-transitive.
		
		Let \(S_n\) denote the symmetric group on \([n] = \{1, \ldots, n\}\). Let \(D_n\) be the set of all derangements in \(S_n\), where a derangement is a permutation with no fixed points. The number of derangements is given by 
		\[
		|D_n| = n! \sum_{i=0}^{n} \frac{(-1)^i}{i!}.
		\]
		The \emph{derangement graph} \(\Gamma_n\) is the Cayley graph \(\Gamma(S_n, D_n)\), where two vertices \(g, h \in \Gamma_n\) are adjacent if and only if \(g(i) \neq h(i)\) for all \(i \in [n]\), or equivalently, if \(h^{-1}g\) fixes no points. Note that \(\Gamma_n\) is loopless because \(D_n\) does not contain the identity of \(S_n\), and it is simple because \(D_n\) is inverse-closed, i.e., \(D_n = \{g^{-1} \mid g \in D_n\}\). Clearly, \(\Gamma_n\) is vertex-transitive, and therefore \(|D_n|\)-regular. Moreover, \(\Gamma_n\) is connected for \(n \geq 4\) because every vertex can be reached from the identity.
		
		In recent decades, many studies have investigated the edge-pancyclicity and edge-fault-tolerant pancyclicity of Cayley graphs on symmetric groups. For example, Jwo et al.\cite{Jwo} and Tseng et al.\cite{Tseng} examined the bipancyclicity and edge-fault-tolerant bipancyclicity of star graphs. Kikuchi and Araki\cite{Kikuchi} discussed the edge-bipancyclicity and edge-fault-tolerant bipancyclicity of bubble-sort graphs. Tanaka et al.\cite{Tanaka} studied the bipancyclicity of Cayley graphs generated by transpositions. 
		
		The derangement graph has also been extensively studied. Research on \(\Gamma_n\) includes topics such as its independence number \cite{KLW, M1}, EKR property \cite{MS}, eigenvalues \cite{KW18, KW13, KW10, R07}, and automorphism group \cite{DZ11}, among other properties. A significant area of interest is the Hamiltonian property of \(\Gamma_n\). The question of whether the derangement graph is Hamiltonian was posed in \cite{R84, W88}, and the existence of a Hamiltonian cycle was proven in \cite{M85, W89}. In \cite{RS}, Rasmussen and Savage showed that \(\Gamma_n\) is Hamilton-connected, meaning that every pair of distinct vertices is connected by a Hamiltonian path. 
		
		In this paper, we establish the following results:

		\begin{thm}\label{main1}
				The derangements graph $\Gamma_n$ is edge pancyclic for $n\ge 4$.
		\end{thm}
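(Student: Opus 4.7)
My plan is to reduce, using the vertex-transitivity of the Cayley graph $\Gamma_n$, to proving that for every derangement $d\in D_n$ the edge $\{e,d\}$ lies on a cycle of length $\ell$ for every $\ell\in\{3,4,\dots,n!\}$. Since left multiplication by any $g\in S_n$ is an automorphism of $\Gamma_n$ sending the edge $\{e,s\}$ to $\{g,gs\}$, every edge of $\Gamma_n$ is the image of one of the form $\{e,d\}$, and the general statement follows from this reduced one.

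I would then proceed by induction on $\ell$. For the base case $\ell=3$, the task is to exhibit a triangle through $\{e,d\}$, i.e.\ a permutation $v\in S_n$ with $v(i)\neq i$ and $v(i)\neq d(i)$ for every $i\in[n]$. Because $d$ is a derangement, the sets $\{i,d(i)\}$ are $2$-element subsets of $[n]$, so this is the problem of choosing a system of distinct representatives avoiding two forbidden values at each position. A short inclusion--exclusion whose leading terms depend only on the cycle type of $d$ (in the spirit of the m\'enage problem) gives a strictly positive count for $n\geq 4$, producing the required triangle; alternatively one may write down such a $v$ case-by-case on the cycle type of $d$.

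For the inductive step $\ell\to\ell+1$ with $\ell<n!$, I would start from a cycle $C$ of length $\ell$ through $\{e,d\}$ and perform an \emph{exchange}: locate an edge $\{u,v\}$ of $C$ distinct from $\{e,d\}$ and a vertex $w\notin V(C)$ with $w\in N(u)\cap N(v)$, then replace $\{u,v\}$ on $C$ by the two-edge path $u\,w\,v$ to obtain a cycle of length $\ell+1$ still containing $\{e,d\}$. Existence of such a triple is argued by counting: vertex-transitivity forces every edge to lie in the same number of triangles, so summing over the $\ell-1$ candidate edges of $C$ yields a pool of triangle completions whose size comfortably dwarfs the $\ell$ vertices of $C$ throughout most of the induction.

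The crux, and what I expect to be the main obstacle, is the \emph{near-Hamiltonian} regime where $\ell$ is close to $n!$ and off-cycle vertices are scarce. The extreme case $\ell=n!$ is free: by the Hamilton-connectedness of $\Gamma_n$ proved in \cite{RS} applied to the pair $(d,e)$, there is a Hamilton path from $d$ to $e$ which, closed by the edge $\{e,d\}$, yields a Hamilton cycle through $\{e,d\}$. Values of $\ell$ just below $n!$, however, seem to demand more delicate surgery than a single vertex insertion: one may need to excise a short segment of $C$ and reinsert a slightly longer path through the few available off-cycle vertices, all while protecting the edge $\{e,d\}$. Handling this regime cleanly, perhaps by a second, downward induction starting from the Hamilton cycle and carefully combining it with the Hamilton-connectedness of $\Gamma_n$ restricted to suitable subsets of $S_n$, is where I would expect to concentrate the bulk of the proof's effort.
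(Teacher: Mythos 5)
Your reduction to edges of the form $\{e,d\}$ and your base case $\ell=3$ are fine, but the inductive step is where the proposal genuinely breaks, and you have correctly located the break without closing it. Two problems. First, the counting justification for the single-vertex insertion is not sound even in the ``comfortable'' middle regime: vertex-transitivity of $\Gamma_n$ does not imply edge-transitivity, so edges need not all lie in the same number of triangles (indeed the number of common neighbours of $e$ and $d$ is a m\'enage-type count depending on the cycle type of $d$); and even granting a large pool of triangle completions over the $\ell-1$ edges of $C$, nothing in the count forces any completion to be an \emph{off-cycle} vertex rather than a vertex already on $C$. Second, and more seriously, in the near-Hamiltonian regime the required triple $(u,v,w)$ with $w\notin V(C)$ may simply fail to exist, and the ``more delicate surgery'' you gesture at (excising and reinserting longer segments) is precisely the content that a proof must supply; Hamilton-connectedness gives you only $\ell=n!$, not the lengths just below it. As it stands the argument establishes edge-pancyclicity only up to some unspecified threshold below $n!$.

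The paper avoids this regime entirely by a global decomposition rather than incremental cycle extension. Fixing an $n$-cycle $\sigma$ with $\alpha\beta^{-1}\notin\{\sigma,\dots,\sigma^{n-1}\}$, it partitions $S_n$ into $(n-1)!$ cosets $A_\tau=\{\tau,\sigma\tau,\dots,\sigma^{n-1}\tau\}$, each inducing a clique $K_n$ in $\Gamma_n$. An auxiliary graph on the cosets (essentially the complement of $\Gamma_{n-1}$, shown edge-pancyclic by a minimum-degree criterion) supplies a cycle through any $k$ cosets, $3\le k\le (n-1)!$, and inside each visited clique one inserts a path of any length from $1$ to $n-1$; this realizes every cycle length in $[6,n!]$ containing $\alpha\beta$ in one stroke, with lengths $3$--$5$ handled separately via disjoint perfect matchings of $K_{n,n}$. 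If you want to salvage your approach, you would need either this kind of clique-path flexibility or a genuinely new argument for lengths in $(n!-c,\,n!)$; the insertion induction alone will not get you there.
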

		
		Then we have the following corollary directly.
		
		\begin{cor}\label{cor1}
		The derangements graph $\Gamma_n$ is $($vertex$)$ pancyclic for $n\ge 4$.
		\end{cor}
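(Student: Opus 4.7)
By the vertex-transitivity of the Cayley graph $\Gamma_n$, it suffices to prove that for every derangement $\sigma \in D_n$ and every integer $\ell$ with $3 \le \ell \le n!$, the edge $\{e,\sigma\}$ lies on a cycle of length $\ell$, where $e$ denotes the identity permutation. My plan is to start with a triangle through $\{e,\sigma\}$ and augment the cycle length by one at each subsequent step, all the way up to $\ell = n!$.

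For the base case $\ell = 3$, I would exhibit a common neighbor of $e$ and $\sigma$ explicitly, i.e., a derangement $\tau$ with $\sigma\tau^{-1}\in D_n$. One convenient source comes from the cyclic subgroup generated by the $n$-cycle $c=(1\,2\,\cdots\,n)$: its nonidentity powers are all derangements, so each coset of $\langle c\rangle$ induces a clique $K_n$ in $\Gamma_n$. Combined with a short case analysis on the cycle type of $\sigma$ (or with the inclusion-exclusion bound $|D_n\cap \sigma D_n|>0$), this produces the required $\tau$.

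The technical heart of the argument is the following extension step: given a cycle $C=e\,\sigma\, x_2 x_3\cdots x_{\ell-1}\,e$ of length $\ell<n!$ through the edge $\{e,\sigma\}$, produce a cycle of length $\ell+1$ through the same edge. I would argue this by locating a vertex $y\notin V(C)$ together with two consecutive vertices $x_i,x_{i+1}$ of $C$ satisfying $\{x_i,x_{i+1}\}\neq\{e,\sigma\}$ and $y\sim x_i$, $y\sim x_{i+1}$; splicing the path $x_i y x_{i+1}$ in for the edge $x_i x_{i+1}$ then yields the desired cycle. After translating by $x_i^{-1}$, existence of $y$ reduces to a lower bound on the number of common neighbors of $e$ and some derangement $\tau$ in $\Gamma_n$, which is a count of derangements $\rho$ with $\tau\rho^{-1}\in D_n$; such a bound is available through inclusion-exclusion and gives a quantity of order $n!/e^2$. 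This comfortably exceeds $|V(C)|-2$ in the range of interest, so an admissible $y$ can be chosen outside $V(C)$. The terminal length $\ell=n!$ is also directly obtained from Rasmussen and Savage's Hamilton-connectedness theorem by closing a Hamilton path from $\sigma$ to $e$ with the edge $\{e,\sigma\}$.

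The principal obstacle is executing the extension step when $\ell$ is close to $n!$, since then only $n!-\ell$ vertices lie outside $V(C)$ and a single edge of $C$ is forbidden from being augmented. In that regime the extension cannot be guaranteed by examining a single consecutive pair; instead one must average over the $\ell-1$ admissible consecutive pairs of $C$ and argue that collectively their common-neighbor sets cannot all be absorbed into $V(C)$. I expect this endgame to be the most delicate combinatorial step, and it will probably require finer estimates on $|D_n\cap \tau D_n|$ tailored to the cycle type of $\tau$, rather than the crude inclusion-exclusion bound used earlier.
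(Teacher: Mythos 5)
Your reduction via vertex-transitivity and your treatment of the two extreme lengths ($\ell=3$ via explicit common neighbours, $\ell=n!$ via Rasmussen--Savage) are fine, but the core of your argument --- the step extending a cycle of length $\ell$ through $\{e,\sigma\}$ to one of length $\ell+1$ --- has a genuine gap that you flag but do not close. The number of common neighbours of two adjacent vertices of $\Gamma_n$ is of order $n!/e^2\approx 0.135\,n!$, so the ``pick one consecutive pair and find a common neighbour outside $V(C)$'' argument only works while $\ell$ is below roughly $0.135\,n!$; for larger $\ell$ the common-neighbour set of a fixed pair can in principle be absorbed entirely by $V(C)$. The proposed repair by averaging over all $\ell-1$ admissible consecutive pairs does not obviously succeed either: the total number of (pair, common neighbour) incidences is about $\ell\cdot n!/e^2$, while each vertex $z$ of $C$ can serve as a common neighbour of up to $|N(z)\cap V(C)|\le |D_n|\approx n!/e$ consecutive pairs, so the vertices of $C$ have capacity $\ell\cdot n!/e$ to absorb all incidences and the crude count gives nothing. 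Even in the extreme case $\ell=n!-1$, the single excluded vertex $y$ has only $|D_n|<(n!-1)/2$ neighbours on $C$, so nothing forces two of them to be consecutive. As written, all lengths from roughly $0.14\,n!$ up to $n!-1$ are therefore not covered, and this is precisely the regime you identify as ``delicate'' without supplying an argument.

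For comparison, the paper obtains the corollary in one line from Theorem \ref{main1}, since edge pancyclic implies vertex pancyclic; and Theorem \ref{main1} itself is proved not by cycle extension but by a decomposition: the cosets $A_\tau=\{\tau,\sigma\tau,\dots,\sigma^{n-1}\tau\}$ of the cyclic group generated by an $n$-cycle $\sigma$ partition $S_n$ into $(n-1)!$ cliques isomorphic to $K_n$, and the induced structure on the stabilizer $T$ of the point $n$ is essentially $\overline{\Gamma_{n-1}}$, which is dense enough to be edge pancyclic by the minimum-degree criterion of Lemma \ref{lem1}. Cycles of every length in that dense auxiliary graph are then inflated through the cliques to realize every length from $6$ to $n!$, with lengths $3$, $4$, $5$ handled separately via disjoint perfect matchings of $K_{n,n}$. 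To salvage your approach you would need either some such global clique structure or a genuinely new argument for the long-cycle regime; the local counting you rely on cannot reach it.
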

		
		We can generalize the above results in two directions. First, let $D_n$ be the set of all permutations with no fixed points, we can see if a constant number of fixed points is permitted, the resulting Cayley graph is still edge-pancyclic.
		
		Fix a non-negative integer $k$, let $D^k_n$ be the set of all permutations with exactly $k$ fixed points. And $\Gamma^k_n$ is a short for the Cayley graph $\Gamma(S_n,D^k_n)$. When $k=0$, $D^0_n=D_n$ is the derangement of $S_n$, and $\Gamma^0_n=\Gamma_n$ is the derangement graph. We have the following results.
		
		\begin{thm}\label{main2}
			For any integer $k\geq 0$, when $n\geq 2k+1$ and $n\geq 4$, $\Gamma^k_n$ is edge-pancyclic.
		\end{thm}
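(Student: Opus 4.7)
The plan is to reduce to Theorem~\ref{main1} by identifying an induced copy of $\Gamma_{n-k}$ inside $\Gamma^k_n$ that contains the specified edge, and then to extend cycles beyond length $(n-k)!$ by an iterative vertex-insertion argument.

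The case $k=0$ is exactly Theorem~\ref{main1}, so assume $k\geq 1$. By vertex-transitivity of $\Gamma^k_n$, I may take the edge under consideration to be $(e,\sigma)$ with $e$ the identity of $S_n$ and $\sigma\in D^k_n$. Set $F:=\mathrm{Fix}(\sigma)\subseteq[n]$, so $|F|=k$, and let $H_F:=\{\pi\in S_n:\pi(i)=i\text{ for all }i\in F\}$, a subgroup isomorphic to $S_{n-k}$. Since $\sigma$ is a derangement on $[n]\setminus F$, we have $\sigma\in H_F$; and for any $\pi_1,\pi_2\in H_F$, the product $\pi_1\pi_2^{-1}$ lies in $D^k_n$ if and only if it acts as a derangement on $[n]\setminus F$. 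Hence the induced subgraph $\Gamma^k_n[H_F]$ is isomorphic to $\Gamma_{n-k}$, with the edge $(e,\sigma)$ sitting inside this copy. When $n-k\geq 4$, Theorem~\ref{main1} immediately supplies cycles of every length $\ell$ with $3\leq\ell\leq(n-k)!$ through $(e,\sigma)$; the only pairs $(k,n)$ satisfying our hypotheses but with $n-k<4$ are $(1,4)$ and $(2,5)$, which I would handle as small boundary cases by direct construction in $\Gamma^1_4$ and $\Gamma^2_5$.

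For lengths $\ell$ with $(n-k)!<\ell\leq n!$, I would argue by induction on $\ell$, starting from the Hamilton cycle of $\Gamma^k_n[H_F]$ through $(e,\sigma)$ of length $(n-k)!$. Given a cycle $C$ of length $\ell-1<n!$ through $(e,\sigma)$, the aim is to find an edge $(x,y)\in E(C)\setminus\{(e,\sigma)\}$ and a vertex $w\in S_n\setminus V(C)$ with $w$ adjacent in $\Gamma^k_n$ to both $x$ and $y$; replacing $(x,y)$ by the path $x,w,y$ then yields a cycle of length $\ell$ through $(e,\sigma)$ and completes the inductive step.

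The main obstacle will be guaranteeing the existence of such a common neighbour $w\in S_n\setminus V(C)$ at every step. This requires a structural argument exploiting that the connection set has size $|D^k_n|=\binom{n}{k}D_{n-k}$, together with a uniform lower bound on the common neighbourhood $|N(x)\cap N(y)|$ of an adjacent pair in $\Gamma^k_n$, which presumably uses the hypothesis $n\geq 2k+1$ crucially. Should the single-vertex insertion fail in some configurations, a backup is to allow insertions of longer paths of length $j\geq 2$ through previously unused vertices -- extending the cycle by $j-1$ in one step -- and to combine single- and multi-vertex insertions via a panconnectedness-type property on $S_n\setminus V(C)$ to cover all intermediate lengths up to $n!$.
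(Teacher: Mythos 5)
Your first step---locating the given edge inside an induced copy of $\Gamma_{n-k}$ (your subgroup $H_F$, which after normalisation is exactly one of the paper's blocks $A_\eta$) and invoking Theorem~\ref{main1} to obtain all cycle lengths in $[3,(n-k)!]$---agrees with the paper. The genuine gap is everything after that. For lengths $\ell>(n-k)!$ you propose to grow a cycle $C$ through $(e,\sigma)$ one vertex at a time by finding an edge $xy\in E(C)$ and a common neighbour $w\in S_n\setminus V(C)$ of $x$ and $y$. You correctly identify the existence of such a $w$ as the crux, but you give no argument for it, and the counting bound you gesture at cannot work across the whole range: the degree of $\Gamma^k_n$ is $\binom{n}{k}D_{n-k}$, a proper fraction of $n!$, so once $C$ covers all but a handful of vertices (in particular at the final step, when $|S_n\setminus V(C)|=1$) a lower bound on $|N(x)\cap N(y)|$ says nothing about whether the few remaining vertices happen to be adjacent to two consecutive vertices of $C$. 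The ``insert a longer path'' fallback inherits the same difficulty. As written, the proposal establishes edge-pancyclicity only up to length $(n-k)!$.

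The paper closes the range $((n-k)!,\,n!]$ with a global rather than local argument: $S_n$ is partitioned into $n!/(n-k)!$ blocks $A_\eta$, indexed by the $k$-tuple of values in the last $k$ positions, each inducing a copy of $\Gamma_{n-k}$; Claim~\ref{claim3.2} orders these blocks in a Gray-code-like sequence so that consecutive blocks can be joined by a $4$-cycle using two adjacent vertices from each, and cycles lying in consecutive blocks are merged pairwise. Absorbing $m$ blocks yields every length up to $m(n-k)!$, hence eventually every length up to $n!$. Some such global structure is what your plan is missing; local insertion alone will not do. (One further point: for the boundary cases $n-k=3$, i.e.\ $(k,n)\in\{(1,4),(2,5)\}$, the inner graph $\Gamma_3$ is a disjoint union of two triangles and is not edge-pancyclic, so the base of the construction needs separate treatment there; you explicitly flag this, whereas the paper glosses over it.)
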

		
		Let $A_{[n]}^k$ denote the ordered $k$-tuples with points in $[n]$.
		For any $k\geq 4$ and $n\geq k$, we denote $G_n^k$ as the graph with vertex set $A_{[n]}^k$, and two vertices $a=(a_1,\dots,a_k)$, $b=(b_1,\dots,b_k)$ are adjacent if $a_i\neq b_i$ for $i=1,\dots,k$. Notice that when $n=k$, $G^k_n\cong \Gamma_n$.
		We will see when $n>k$, the resulting graph $G^k_n$ is still edge-pancyclic.
		
		\begin{thm}\label{main3}
				When $n\geq k\geq 4$, $G_n^k$ is edge-pancyclic.
			\end{thm}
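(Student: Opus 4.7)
The plan is to prove Theorem~\ref{main3} by induction on $n$ with $k\ge 4$ held fixed. The base case $n=k$ is immediate from Theorem~\ref{main1}, since the excerpt observes $G_k^k\cong \Gamma_k$. For the inductive step, assume $G_{n-1}^k$ is edge-pancyclic. Fix an edge $uv\in E(G_n^k)$ and a target length $\ell$ with $3\le \ell\le |A_{[n]}^k|$. Using vertex-transitivity of $G_n^k$ (the action of $S_n$ applied uniformly to all coordinates and of $S_k$ permuting coordinates), normalize so that $u=(1,\dots,k)$ and $v$ is a $k$-injection into $[n]$ with $v_i\ne i$ for all $i$. Let $T=u([k])\cup v([k])$, so $k\le |T|\le \min(2k,n)$.

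The main case is $|T|<n$. Pick any value $j\in [n]\setminus T$. The induced subgraph $H_j$ of $G_n^k$ on $k$-injections whose image avoids $j$ is isomorphic to $G_{n-1}^k$ and contains the edge $uv$. For $3\le \ell\le |A_{[n-1]}^k|$ the inductive hypothesis produces a cycle of length $\ell$ through $uv$ inside $H_j$, hence inside $G_n^k$. For the longer lengths $|A_{[n-1]}^k|<\ell\le |A_{[n]}^k|$, start with a Hamilton cycle $C_0$ of $H_j$ through $uv$ and iteratively splice in vertices from $V(G_n^k)\setminus V(H_j)$ (each of which uses the value $j$ in some coordinate). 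Each splicing step locates $x\notin V(C)$ together with two consecutive vertices $y_1,y_2$ on the current cycle $C$ with $y_1y_2\ne uv$ and $x\in N(y_1)\cap N(y_2)$, and replaces the edge $y_1y_2$ by the path $y_1xy_2$, raising the length by exactly one; performing $\ell-|A_{[n-1]}^k|$ such splicings yields the desired cycle.

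The remaining case $|T|=n$ occurs only when $k+1\le n\le 2k$ and $u,v$ jointly cover $[n]$. Since no value can be deleted, I would instead fix a coordinate $p\in[k]$ and partition $V(G_n^k)=\bigsqcup_{r\in[n]}V_r$ by $V_r=\{a:a_p=r\}$; each $V_r$ is independent in $G_n^k$, and the bipartite graph between distinct $V_r,V_s$ structurally resembles (a subgraph of) $G_n^{k-1}$. The cycles are then built by an ad hoc construction that routes through these bipartite layers, or, as an alternative, by embedding $G_n^k$ into $G_{n+1}^k$ (which puts us back into the main case), applying the inductive conclusion there, and pulling back the resulting cycles to $G_n^k$ while preserving length.

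The main obstacle will be the splicing step, namely showing that every non-spanning cycle $C$ through $uv$ in $G_n^k$ admits a vertex $x\notin V(C)$ with two consecutive neighbours $y_1,y_2$ on $C\setminus\{uv\}$. A double-counting argument using the large regularity degree of $G_n^k$ should produce many such pairs whenever $V(G_n^k)\setminus V(C)$ is sizeable; the delicate subcase is when $C$ is nearly spanning and the few remaining vertices align poorly with consecutive pairs on $C$. In those situations one likely has to perform a longer-range exchange, replacing an entire sub-path of $C$ by a detour through previously-excluded vertices rather than inserting a single vertex, and this is where the bulk of the technical care will lie.
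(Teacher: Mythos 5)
Your induction skeleton matches the paper's (induct on $n$, base case $G_k^k\cong\Gamma_k$, use that the tuples avoiding a fixed value $j$ induce a copy of $G_{n-1}^k$), but the two steps you leave open are exactly the hard part, and neither is likely to go through as sketched. First, the splicing step: the vertices you must insert are precisely those whose image contains $j$, i.e.\ the set $E_j$ of the paper, and there are $\tfrac{n!}{(n-k)!}-\tfrac{(n-1)!}{(n-1-k)!}=k\cdot\tfrac{(n-1)!}{(n-k)!}$ of them --- a constant fraction ($k/n$) of the whole vertex set, not a small remainder. Moreover, a vertex off the cycle need not have two consecutive neighbours on it: the degree of $G_n^k$ can be well below half the cycle length (already for $n=k$ the degree is $|D_k|\approx k!/e$ while a near-Hamilton cycle has length close to $k!$), so the pigeonhole/double-counting you propose fails outright in the regime where most insertions must happen, and you concede you have no replacement. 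Second, the $|T|=n$ case: embedding into $G_{n+1}^k$ and ``pulling back'' cannot work, because a cycle through $uv$ in $G_{n+1}^k$ will in general pass through tuples that use the value $n+1$ and hence do not exist in $G_n^k$; the bipartite-layer alternative is not developed at all. So the proposal has genuine gaps at both of its load-bearing points.

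For contrast, the paper avoids single-vertex insertion entirely. It proves a separate structural result (Lemma~\ref{lem2}) that the induced subgraph $H_1^i$ on the tuples \emph{containing} $i$ is itself edge-pancyclic, by rerunning the machinery of Theorem~\ref{main1}: partition $H_1^i$ into cliques $A_\tau=\{\tau,\sigma\tau,\dots,\sigma^{k-1}\tau\}$ via a cyclic permutation $\sigma$, control the quotient with the Dirac-type Lemma~\ref{lem1}, and thread paths of all lengths through the cliques. With both $H_1^i$ and $H_2^i\cong G_{n-1}^k$ edge-pancyclic, any target length in $[3,\,n!/(n-k)!]$ is obtained by taking one cycle in each part and merging them with a two-edge exchange across a matching pair of edges (splitting into the cases $\{\alpha\}=\{\beta\}$ and $\{\alpha\}\neq\{\beta\}$ according to where the given edge sits). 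If you want to salvage your route, you would essentially have to prove something equivalent to Lemma~\ref{lem2} anyway, at which point the two-edge merge is both simpler and complete.
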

		
		The paper is arranged as follows. In Section \ref{sec2}, we will prove Theorem \ref{main1}. In Sections \ref{sec3} and \ref{sec4}, we will give the generalization of Theorems \ref{main1} and prove Theorem \ref{main2} and \ref{main3}, respectively.

		\vskip.2cm
		
		\section{Proof of Theorem \ref{main1}}\label{sec2}
		\vskip.2cm
		
		In order to proof Theorem \ref{main1}, we need the following lemma.
		
		\vskip.2cm
		\begin{lemma}\label{lem1}{\em (\cite{M85},\text{Theorem 45})}
		Let $\Gamma$ be a graph of order $n \ge 3$. If
			$\delta(\Gamma)\ge (n+2)/2$, then $\Gamma$ is edge pancyclic.
			\end{lemma}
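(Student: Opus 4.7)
The plan is to prove that every edge $e = uv$ of $\Gamma$ lies on a cycle of each length $\ell \in \{3,\ldots,n\}$, by (strong) induction on $\ell$. For the base case $\ell = 3$, the standard inclusion-exclusion gives
\[
|N(u)\cap N(v)| \ge d(u)+d(v) - n \ge (n+2) - n = 2 > 0,
\]
so $u$ and $v$ share a common neighbor and $e$ lies in a triangle. For $\ell = 4$: pick any $w_1\in N(v)\setminus\{u\}$; the same estimate gives $|N(u)\cap N(w_1)| \ge 2$, so one can choose $w_2$ in this intersection with $w_2 \ne v$ (and automatically $w_2 \ne w_1$ since $w_1 \notin N(w_1)$), producing a $4$-cycle $uvw_1w_2u$ through $e$.

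For the inductive step, assume $C = u_0 u_1 \cdots u_{\ell-1} u_0$ is a cycle of length $\ell \ge 3$ through $e$ with $u_0 = u$ and $u_1 = v$, and set $H = V(\Gamma)\setminus V(C)$. The primary move is \emph{vertex insertion}: if some $w \in H$ has two cyclically consecutive cycle-neighbors $u_i, u_{i+1}$ with $\{u_i,u_{i+1}\} \ne \{u,v\}$, then $u_0\cdots u_i\, w\, u_{i+1}\cdots u_{\ell-1} u_0$ is a cycle of length $\ell+1$ through $e$, completing the step.

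If vertex insertion fails for every $w \in H$, then for each such $w$ the set $N(w)\cap V(C)$ is ``almost independent'' in $C$: no two consecutive indices except possibly the pair $(0,1)$. A short combinatorial estimate (a $0/1$ sequence on a cycle with at most one permitted consecutive pair of $1$'s) bounds $|N(w)\cap V(C)| \le \lceil \ell/2\rceil$. Together with $|N(w)\cap H| \le n-\ell-1$ this gives $d(w) \le \lceil \ell/2\rceil + (n-\ell-1)$, which contradicts $\delta(\Gamma) \ge (n+2)/2$ whenever $\ell$ is close to $n$ (roughly $\ell \ge n-3$), forcing the single insertion to succeed.

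The main obstacle is the regime $\ell \le n-4$, where the above counting alone does not yield a contradiction. The plan here is to exploit the failure of insertion: it forces every $w\in H$ to have at least $(n+2)/2 - \lceil \ell/2\rceil$ neighbors inside $H$, making $\Gamma[H]$ very dense. Picking an edge $w_1 w_2 \in E(\Gamma[H])$ and, via a second counting step using the many cycle-neighbors of $w_1$ and $w_2$, an index $i\ne 0$ with $w_1 \sim u_i$ and $w_2 \sim u_{i+1}$, one performs a \emph{double insertion} $u_i u_{i+1}\mapsto u_i w_1 w_2 u_{i+1}$ to obtain a cycle of length $\ell+2$ through $e$. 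Combining single- and double-insertions with the two base cases $\ell = 3, 4$ then lets the strong induction cover every length from $3$ to $n$; keeping track of the edge $e$ throughout, and arranging the double-insertion counting when $\Gamma[H]$ is only moderately dense, is the most delicate part.
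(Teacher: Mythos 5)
The paper offers no proof of this lemma --- it is imported from the literature via the citation attached to it --- so there is nothing internal to compare against; I can only assess your argument on its own terms. Your base cases $\ell=3,4$ and the single-insertion step are sound, and your counting correctly shows that single insertion is guaranteed only when $\ell$ is within about $3$ of $n$. The genuine gap is the regime $\ell\le n-4$, where your ``double insertion'' is asserted rather than proved, and the counting it would require actually fails. If single insertion fails, each $w\in H$ indeed has many neighbours in $H$, so $\Gamma[H]$ contains an edge $w_1w_2$; but to insert $w_1w_2$ you need consecutive positions with $w_1\sim u_i$ and $w_2\sim u_{i+1}$, $i\ne 0$. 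The only lower bound available on $|N(w_j)\cap V(C)|$ is $(n+2)/2-(|H|-1)=\ell+2-n/2$, which is nonpositive once $\ell\le n/2-2$: a vertex of $H$ may have \emph{no} neighbour on $C$ at all. Even when both $w_1,w_2$ do meet $C$, finding the required index $i$ by counting needs $|A|+|B|>\ell$ for the two index sets, and the bound above gives only $|A|+|B|\ge 2\ell+4-n$, which exceeds $\ell$ precisely when $\ell>n-4$ --- the regime where you did not need the double insertion in the first place. So the ``second counting step'' you invoke has no content where it matters.

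There is a second, structural problem: even if double insertion always succeeded whenever single insertion fails, an induction that sometimes advances from $\ell$ to $\ell+2$ does not ``cover every length from $3$ to $n$.'' To realize a target length $L$ you would need either some $(L-1)$-cycle through $e$ admitting a single insertion or some $(L-2)$-cycle through $e$ admitting a double insertion; your argument only shows that from any one cycle at least one of the two moves is (allegedly) available, so lengths can be skipped. Filling in the short-cycle regime requires a different mechanism entirely --- for example, constructing cycles of length $\ell\le n/2$ directly through $uv$ by a greedy common-neighbour argument (at each step $|N(x)\cap N(y)|\ge 2d-n$ leaves room to avoid the few vertices already used when $\ell$ is small), or deducing the lemma from panconnectedness proved by induction on $n$ as in Williamson's treatment. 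As written, the proposal proves the statement only for $\ell\in\{3,4\}$ and $\ell\ge n-3$ (roughly), and the bridge between these ranges is missing.
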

		
		We need some extra notations. Let $S_n$ be the symmetric group on $[n] = \{1,\ldots, n\}$. We denote by $C_n$ the set of permutations in $S_n$ that consist of one single cycle of
		length $n$. We call these {\em cyclic permutations}. It is clear that $|C_n| = (n -1)!$ and $\{1,\sigma(1),\sigma^2(1),\ldots,\sigma^{n-1}(1)\}=[n]$ for $\sigma\in C_n$. For $\sigma_1,\sigma_2\in S_n$, let $\Delta(\sigma_1,\sigma_2)$ be the numbers of the fixed points of $\sigma_1^{-1}\sigma_2$. We first have the following claim.
		
		\begin{claim}\label{claim2.1}
			 For any $\alpha,\beta\in S_n$ and $\sigma\in C_n$, we have $$\sum_{i=0}^{n-1}\Delta(\alpha,\sigma^i\beta)=n.$$
			\end{claim}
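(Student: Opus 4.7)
The plan is to unpack the definition of $\Delta$ as a double sum of indicators and swap the order of summation. By definition, $\Delta(\alpha,\sigma^i\beta)$ equals the number of $j \in [n]$ for which $\alpha^{-1}\sigma^i\beta(j)=j$, equivalently $\sigma^i(\beta(j))=\alpha(j)$. Writing this as an indicator sum, the quantity we want becomes
\[
\sum_{i=0}^{n-1}\Delta(\alpha,\sigma^i\beta)=\sum_{i=0}^{n-1}\sum_{j=1}^{n}\bigl[\sigma^i(\beta(j))=\alpha(j)\bigr].
\]

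Swapping the order of summation turns this into $\sum_{j=1}^{n}\sum_{i=0}^{n-1}\bigl[\sigma^i(\beta(j))=\alpha(j)\bigr]$, so it suffices to show that for each fixed $j$ the inner sum equals $1$. This is where the hypothesis $\sigma\in C_n$ enters: since $\sigma$ is a single $n$-cycle, the orbit $\{\sigma^i(x):0\le i\le n-1\}$ equals $[n]$ and each element is visited exactly once, for any starting point $x\in[n]$. Applying this with $x=\beta(j)$, there is exactly one index $i\in\{0,\dots,n-1\}$ for which $\sigma^i(\beta(j))=\alpha(j)$.

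Summing the contribution $1$ over $j=1,\dots,n$ gives $n$, which is the desired identity. The whole argument is a one-line double-counting once the notation is unpacked, so there is no real obstacle; the only point requiring attention is making sure the orbit fact is stated for an arbitrary starting vertex $\beta(j)$ (not just $1$), which follows from $\sigma$ being a single cycle of full length.
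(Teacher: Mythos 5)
Your proof is correct and follows essentially the same route as the paper: the paper likewise observes that for each position $a$ and target value $b=\alpha(a)$ there is exactly one $i\in\{0,\dots,n-1\}$ with $\sigma^i\beta(a)=b$, because $\sigma$ being a full $n$-cycle makes $\{\sigma^i\beta(a)\}_{i=0}^{n-1}=[n]$, and then sums over positions. Your version just makes the double-counting explicit with indicator sums; no difference in substance.
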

		
		\noindent{\bf Proof of Claim~\ref{claim2.1}} Note that for any $a,b\in [n]$, there is only $i\in \{0,1,\ldots,n-1\}$ such that $\sigma^i\beta(a)=b$. Since $\sigma\in C_n$, $\{\sigma^i\beta(a)~|~i=0,1,\ldots,n-1\}=[n]$ which implies the result holds.\q
		
		\vskip.2cm
		
		\noindent{\bf Proof of Theorem \ref{main1}} Given $\alpha\beta\in E(\Gamma_n)$. By the definition of $\Gamma_n$, we have $\alpha\not=\beta$ and $ \Delta(\alpha,\beta)=0$. Since $|C_n|=(n-1)!$ and $n\ge 4$, there is $\sigma\in C_n$ such that $\alpha\beta^{-1}\not\in\{\sigma,\sigma^2,\ldots,\sigma^{n-1}\}$. By Claim \ref{claim2.1} and $ \Delta(\alpha,\beta)=0$, there is $i_0\in [n-1]$ such that $ \Delta(\alpha,\sigma^{i_0}\beta)\ge 2$. Since $\alpha\beta^{-1}\not\in\{\sigma,\sigma^2,\ldots,\sigma^{n-1}\}$, $\alpha\not=\sigma^{i_0}\beta$. Let $\beta_0=\sigma^{i_0}\beta$ for short. Then there are $a,b,c,d\in [n]$ such that $\alpha(a)=\beta_0(a)=b$ and $\alpha(c)=\beta_0(c)=d$. Assume, without loss of generality, that $c=n$.
		
		\begin{claim}\label{claim2.2}
			We can assume that $d=n$.
			\end{claim}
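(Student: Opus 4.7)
The plan is to justify this WLOG by producing an automorphism of $\Gamma_n$ that sends the current configuration to one satisfying $d=n$, without disturbing any of the structural conditions already accumulated: $\alpha\beta\in E(\Gamma_n)$, $\Delta(\alpha,\beta_0)\ge 2$, $\alpha\beta^{-1}\notin\{\sigma,\ldots,\sigma^{n-1}\}$, and $c=n$. The natural candidates are the Cayley automorphisms given by left multiplication: for each $\tau\in S_n$, the map $g\mapsto\tau g$ is an automorphism of $\Gamma_n$, and it preserves $\Delta$ exactly, since $(\tau g)^{-1}(\tau h)=g^{-1}h$. The essential observation is that such a map alters the values $b,d$ of $\alpha$ and $\beta_0$ but fixes their inputs $a,c$, so it can be used to move $d$ to $n$ while keeping $c=n$.

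Concretely, if $d\neq n$, let $\tau$ be the transposition swapping $d$ and $n$, and replace the triple $(\alpha,\beta,\sigma)$ by $(\tau\alpha,\tau\beta,\tau\sigma\tau^{-1})$. Then $\tau\sigma\tau^{-1}\in C_n$, because cycle type is conjugation-invariant, and
\[
\tau\beta_0=\tau\sigma^{i_0}\beta=(\tau\sigma\tau^{-1})^{i_0}(\tau\beta),
\]
so the new $\beta_0$ is built from the new $\beta$ and the new $\sigma$ in exactly the same way as before. Moreover $(\tau\alpha)(n)=\tau(d)=n=(\tau\beta_0)(n)$ and $(\tau\alpha)(a)=\tau(b)=(\tau\beta_0)(a)$, so the agreeing pairs become $(a,\tau(b))$ and $(n,n)$, and the new $d$ equals $n$. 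The remaining conditions carry over verbatim: $\Delta(\tau\alpha,\tau\beta)=0$, $\Delta(\tau\alpha,\tau\beta_0)\ge 2$, and $\tau(\alpha\beta^{-1})\tau^{-1}$ is not a power of $\tau\sigma\tau^{-1}$ because $\alpha\beta^{-1}$ is not a power of $\sigma$.

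Finally, since $g\mapsto\tau g$ is a graph automorphism, any cycle of length $\ell$ through the transformed edge $(\tau\alpha)(\tau\beta)$ yields, by applying $\tau^{-1}$ back, a cycle of the same length through the original edge $\alpha\beta$. Hence it suffices to prove the edge-pancyclic conclusion in the reduced situation $c=d=n$. I do not foresee any substantive obstacle in this step: the content of the claim is just the observation that the four conditions accumulated so far are invariant under the subgroup of Cayley automorphisms given by left multiplication, and the transposition of $d$ and $n$ realizes precisely the reduction we want.
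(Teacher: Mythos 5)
Your proposal is correct and takes essentially the same approach as the paper: left multiplication by the transposition $(d,n)$, which is a $\Delta$-preserving automorphism of $\Gamma_n$. You are in fact slightly more careful than the paper, which leaves implicit the need to replace $\sigma$ by its conjugate $\tau\sigma\tau^{-1}$ so that $\beta_0$ transforms consistently.
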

		
		\noindent{\bf Proof of Claim \ref{claim2.2}} Assume $d\not=n$.  Let $\gamma$ be a transposition $(d,n)$ in $S_n$.
		Denote a mapping $\varphi~:~V(\Gamma_n)\rightarrow V(\Gamma_n)$ such that $\varphi(\theta)=\gamma\theta$. Since $\alpha^{-1}\beta=\alpha^{-1}\gamma^{-1}\gamma\beta=(\gamma\alpha)^{-1}(\gamma\beta)$, $\varphi$ is an automorphism of $\Gamma_n$ which implies the claim holds. \q
		
		By Claim \ref{claim2.2}, we assume $\alpha(n)=\beta_0(n)=n$. Denote $T=\{\tau\in S_n~|~\tau(n)=n\}$. Then $|T|=(n-1)!$ and $\alpha,\beta_0\in T$. For any $\tau\in T$, let $A_\tau=\{\tau,\sigma\tau,\sigma^2\tau,\ldots,\sigma^{n-1}\tau\}$. Then $\beta\in A_{\beta_0}$.
		
		\begin{claim}\label{claim2.3}
		$S_n=\cup_{\tau\in T}A_\tau$ and $\Gamma_n[A_\tau]\cong K_n$ for any $\tau\in T$, where $K_n$ is a complete graph of order $n$.
		\end{claim}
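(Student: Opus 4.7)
The plan is to verify the two assertions of the claim separately, both by elementary computations using the fact that $\sigma$ is an $n$-cycle.

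For the equality $S_n=\bigcup_{\tau\in T}A_\tau$, I would show every $\pi\in S_n$ lies in some $A_\tau$ with $\tau\in T$. Since $\sigma\in C_n$, the orbit $\{\sigma^i(n)\mid 0\le i\le n-1\}$ equals $[n]$, so there is a unique $i$ with $\sigma^i(n)=\pi(n)$. Setting $\tau=\sigma^{-i}\pi$, one gets $\tau(n)=n$, hence $\tau\in T$, and $\pi=\sigma^i\tau\in A_\tau$. This gives the decomposition (in fact a partition, though the claim only asks for a covering).

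For the isomorphism $\Gamma_n[A_\tau]\cong K_n$, I first check $|A_\tau|=n$: if $\sigma^i\tau=\sigma^j\tau$ for $0\le i,j\le n-1$, then $\sigma^{i-j}=\mathrm{id}$, which forces $i=j$ because an $n$-cycle has order exactly $n$. Next, for any two distinct $0\le i<j\le n-1$, adjacency of $\sigma^i\tau$ and $\sigma^j\tau$ in $\Gamma_n$ amounts to $(\sigma^j\tau)(\sigma^i\tau)^{-1}=\sigma^{j-i}$ being a derangement. Since $\sigma$ is an $n$-cycle, the cycle structure of $\sigma^{j-i}$ consists of $\gcd(n,j-i)$ cycles each of length $n/\gcd(n,j-i)$, and for $1\le j-i\le n-1$ this common length is at least $2$, so $\sigma^{j-i}\in D_n$. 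Thus every pair of distinct vertices in $A_\tau$ is adjacent, giving $\Gamma_n[A_\tau]\cong K_n$.

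Neither step looks to be a real obstacle: the whole content of the claim is that multiplying on the left by the cyclic group $\langle\sigma\rangle$ partitions $S_n$ into cosets, each of which is a clique of $\Gamma_n$ because nontrivial powers of an $n$-cycle are derangements. The only place to be careful is in writing adjacency in the Cayley graph in the correct orientation (using $vu^{-1}\in D_n$) so that the $\tau$ cancels and leaves the pure power $\sigma^{j-i}$.
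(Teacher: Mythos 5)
Your proof is correct and follows essentially the same route as the paper's: both parts rest on the fact that a nontrivial power of an $n$-cycle fixes no point, so the $A_\tau$ are the left cosets of $\langle\sigma\rangle$ and each is a clique. The only cosmetic difference is that you establish the covering directly by exhibiting the representative $\tau=\sigma^{-i}\pi$, whereas the paper proves the $A_\tau$ are pairwise disjoint and lets the count $|T|\cdot n=n!$ finish the job.
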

		
		\noindent{\bf Proof of Claim \ref{claim2.3}} In order to show $S_n=\cup_{\tau\in T}A_\tau$, we just need to prove $A_{\tau_1}\cap A_{\tau_2}=\emptyset$ for any $\tau_1,\tau_2\in T$ with $\tau_1\not=\tau_2$. Suppose there are $\tau_1,\tau_2\in T$ with $\tau_1\not=\tau_2$ such that $A_{\tau_1}\cap A_{\tau_2}\not=\emptyset$. Then there are $i,j\in \{0,1,\ldots,n-1\}$ such that $\sigma^i\tau_1=\sigma^j\tau_2$. Assume $i>j$. Then we have $\sigma^{i-j}\tau_1=\tau_2$. Since $\tau_1,\tau_2\in T$, we have $\tau_1(n)=\tau_2(n)=n$ which implies $\sigma^{i-j}(n)=n$, a contradiction with $\sigma\in C_n$.
		
		Let $\tau\in T$ and $\pi_1,\pi_2\in A_\tau$. Then there are $i,j\in \{0,1,\ldots,n-1\}$ such that $\pi_1=\sigma^i\tau$ and $\pi_2=\sigma^j\tau$. Assume $i>j$. If $\Delta(\pi_1,\pi_2)\ge 1$, say $\pi_1(k)=\pi_2(k)$ ($k\in [n]$), then $\sigma^{i-j}(\tau(k))=\tau(k)$, a contradiction with $\sigma\in C_n$. Hence $\Delta(\pi_1,\pi_2)=0$ and then $\Gamma_n[A_\tau]\cong K_n$.\q

		\begin{claim}\label{claim2.4}
		 Let $\overline{\Gamma_{n-1}}$ be the complement of $\Gamma_{n-1}$. If $n\ge 5$, then $\overline{\Gamma_{n-1}}$ is edge pancyclic. If $n=4$, then $\overline{\Gamma_{3}}$ is edge even-pancyclic. 
		 \end{claim}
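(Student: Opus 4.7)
The plan is to split on $n\geq 5$ versus $n=4$. First observe that $\overline{\Gamma_{n-1}}$ has order $N=(n-1)!$, and two permutations $\pi_1,\pi_2\in S_{n-1}$ are adjacent in $\overline{\Gamma_{n-1}}$ iff $\Delta(\pi_1,\pi_2)\geq 1$; since $\Gamma_{n-1}$ is vertex-transitive and $|D_{n-1}|$-regular, its complement is regular of degree $N-1-|D_{n-1}|$.

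For $n\geq 5$, I would apply Lemma~\ref{lem1} directly. The required hypothesis $N-1-|D_{n-1}|\geq (N+2)/2$ rearranges to $(n-1)!\geq 2|D_{n-1}|+4$. Using the standard estimate $|D_m|<m!/e+1$, this holds whenever $(1-2/e)(n-1)!\geq 6$, which is clearly true once $(n-1)!\geq 24$, i.e., for all $n\geq 5$. The boundary case $n=5$ can moreover be checked by hand: $|D_4|=9$, so $\delta(\overline{\Gamma_4})=24-1-9=14\geq 13=(24+2)/2$. Hence Lemma~\ref{lem1} immediately yields that $\overline{\Gamma_{n-1}}$ is edge pancyclic.

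For $n=4$, I would identify $\overline{\Gamma_3}$ explicitly. Since $D_3=\{(1\,2\,3),(1\,3\,2)\}$ together with the identity is the cyclic subgroup $A_3$, left-multiplication by any element of $D_3$ preserves the parity of a permutation. Thus in $\Gamma_3$ the three even permutations induce a $K_3$ and the three odd permutations induce another $K_3$; that is, $\Gamma_3\cong 2K_3$ and consequently $\overline{\Gamma_3}\cong K_{3,3}$. Since every edge of $K_{3,3}$ lies on a $4$-cycle (choose any other vertex on each side) and on a Hamiltonian $6$-cycle (standard), $\overline{\Gamma_3}$ is edge even-pancyclic.

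The only real obstacle is the small-case verification of the degree inequality, which is why $n=4$ must be handled separately; once past this threshold, Lemma~\ref{lem1} does all the work.
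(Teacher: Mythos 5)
Your proposal is correct and follows essentially the same route as the paper: for $n\ge 5$ you verify the minimum-degree hypothesis of Lemma~\ref{lem1} for $\overline{\Gamma_{n-1}}$ (the paper does the same computation via the alternating-sum formula for $|D_{n-1}|$, reaching $\delta(\overline{\Gamma_{n-1}})\ge \frac{(n-1)!}{2}+1$), and for $n=4$ you identify $\overline{\Gamma_3}\cong K_{3,3}$ and check even pancyclicity directly. Your write-up is in fact slightly more complete, since you justify the isomorphism $\overline{\Gamma_3}\cong K_{3,3}$ and the boundary case $n=5$ explicitly, which the paper leaves to the reader.
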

		
		\noindent{\bf Proof of Claim \ref{claim2.4}} Note that $\Gamma_{n-1}$ is $|D_{n-1}|$-regular. Then
		$$\delta(\overline{\Gamma_{n-1}})=(n-1)!-1-(n-1)!\sum_{i=0}^{n-1}\frac{(-1)^i}{i!}=(n-1)!\sum_{i=1}^{n-1}\frac{(-1)^{i-1}}{i!}-1\ge \frac{(n-1)!}{2}+1$$ if $n\ge 5$. Thus $\overline{\Gamma_{n-1}}$ is edge pancyclic by Lemma \ref{lem1} when $n\ge 5$.
		
		If $n=4$, then $\overline{\Gamma_{3}}\cong K_{3,3}$. Thus the result holds.
		\q
		
		We complete the proof by considering the following two cases.
		
		\noindent{\bf Case 1.} $n\ge 5$.
		
		Let $\tau=\tau(1)\tau(2)\cdots\tau(n-1)\tau(n)\in T$. Then $\tau(n)=n$. Denote $\widehat{\tau}=\tau(1)\cdots\tau(n-1)$ and $\widehat{T}=\{\widehat{\tau}~|~\tau\in T\}$. Then $\widehat{\tau}\in S_{n-1}$ and $\widehat{T}=S_{n-1}$. So $\Gamma(\widehat{T},D_{n-1})=\Gamma_{n-1}$.
		Since $\alpha,\beta_0\in T$ and $ \Delta(\alpha,\beta_0)\ge 2$, $ \Delta(\widehat{\alpha},\widehat{\beta_0})\ge 1$ which implies $\widehat{\alpha}\widehat{\beta_0}\in E(\overline{\Gamma(\widehat{T},D_{n-1})})$. By Claim \ref{claim2.4}, for any integer $3\le k\le (n-1)!$, there are $\widehat{\tau_1},\widehat{\tau_2},\ldots,\widehat{\tau_k}\in \widehat{T}$ such that $\widehat{\tau_1}=\widehat{\alpha}$, $\widehat{\tau_2}=\widehat{\beta_0}$ and $\widehat{\tau_1}\widehat{\tau_2}\ldots\widehat{\tau_k}\widehat{\tau_1}$ is a cycle of $\overline{\Gamma(\widehat{T},D_{n-1})}$.
		Since $\widehat{\tau_i}\widehat{\tau_{i+1}}\in E(\overline{\Gamma(\widehat{T},D_{n-1})})$,
		$\Delta(\widehat{\tau_i},\widehat{\tau_{i+1}})\ge 1$ for all $1\le i\le k$, where the subscripts are modulo $k$. Hence $\Delta(\tau_i,\tau_{i+1})\ge 2$ for all $1\le i\le k$. By Claim \ref{claim2.1}, there is $\theta_{i+1}\in A_{\tau_{i+1}}$ such that $\Delta(\tau_i,\theta_{i+1})=0$ which implies $\tau_i\theta_{i+1}\in E(\Gamma_n)$ for all $1\le i\le k$. Recall $\tau_1=\alpha$, $\tau_2=\beta_0$, $\beta\in A_{\beta_0}$ and $ \Delta(\alpha,\beta)=0$. Then we can let $\theta_2=\beta$. By Claim \ref{claim2.3}, $\Gamma_n[A_{\tau_i}]\cong K_n$ for all $1\le i\le k$ and they are vertex-disjoint. Let $P_{ij}$ be a path of length $j$ connecting $\theta_i$ and $\tau_i$ in $\Gamma_n[A_{\tau_i}]$, where $1\le i\le k$ and $1\le j\le n-1$. Then
		$$\theta_1P_{1j_1}\tau_1(=\alpha)\theta_2(=\beta)P_{2j_2}\tau_2\theta_3P_{3j_3}\ldots \theta_kP_{kj_k}\tau_k\theta_1$$is a cycle of length $k+\sum_{s=1}^kj_s$ contained the edge $\alpha\beta$, where $1\le j_s\le n-1$ for all $1\le s\le k$. Since $3\le k\le (n-1)!$, there is a cycle of length $l$ contained  $\alpha\beta$ for all $6\le l\le n!$. To finish our proof, we just need to show that there is a cycle of length $l$ contained  $\alpha\beta$ for all $3\le l\le 5$.
		
		For any $\pi\in S_n$, denote $M(\pi)=\{(1,\pi(1)),(2,\pi(2)),\ldots,(n,\pi(n))\}$. Then there is a bijection between $\pi$ and $M(\pi)$. For any $\pi_1,\pi_2\in V(\Gamma_n)$, if $\pi_1\pi_2\in E(\Gamma_n)$, then $M(\pi_1)\cap M(\pi_2)=\emptyset$; vice versa. Particularly, $M(\alpha)\cap M(\beta)=\emptyset$.  We consider the complete bipartite graph $K_{n,n}$. Then $M(\pi)$ can be treated as a perfect matching of $K_{n,n}$. Since $n\ge 5$, we can find five disjoint perfect matchings $M(\alpha), M(\beta),M(\pi_1),M(\pi_2),M(\pi_3)$. Hence $\alpha\beta\pi_1\alpha$, $\alpha\beta\pi_1\pi_2\alpha$ and $\alpha\beta\pi_1\pi_2\pi_3\alpha$ are three cycles contained  $\alpha\beta$ of length 3, 4 and 5, respectively.
		
		\begin{figure}[!htbp]
			\begin{center}\includegraphics[scale=0.8]{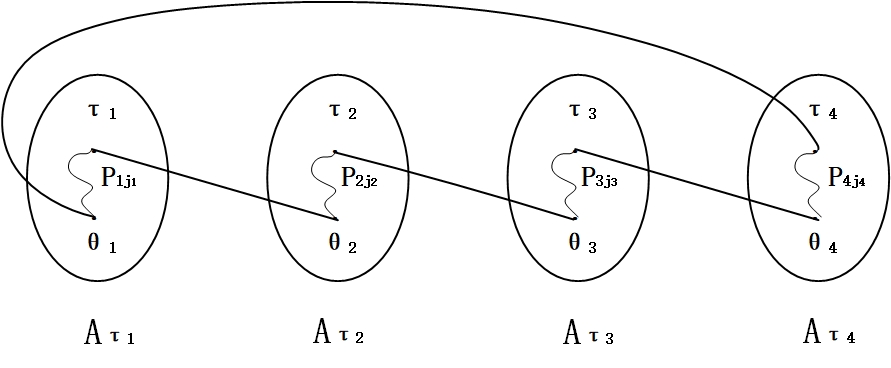}\\\end{center}
			\caption{The construction of the cycle when $k=4$}\label{k4}
		\end{figure}
		
		\noindent{\bf Case 2.} $n=4$.
		
		Let $\tau=\tau(1)\tau(2)\tau(3)\tau(4)\in T$. Then $\tau(4)=4$. Denote $\widehat{\tau}=\tau(1)\tau(2)\tau(3)$ and $\widehat{T}=\{\widehat{\tau}~|~\tau\in T\}$. By the same argument as that of Case 1, we have $\Gamma(\widehat{T},D_{3})=\Gamma_{3}$. By Claim \ref{claim2.4}, for  $ k=4$ and $6$, there are $\widehat{\tau_1},\widehat{\tau_2},\ldots,\widehat{\tau_k}\in \widehat{T}$ such that $\widehat{\tau_1}=\widehat{\alpha}$, $\widehat{\tau_2}=\widehat{\beta_0}$ and $\widehat{\tau_1}\widehat{\tau_2}\ldots\widehat{\tau_k}\widehat{\tau_1}$ is a cycle of length $k$ in $\overline{\Gamma(\widehat{T},D_{3})}$. Then there is a cycle of length $l$ contained  $\alpha\beta$ for all $8\le l\le 24$. By the same argument, we can find four disjoint perfect matchings $M(\alpha), M(\beta),M(\pi_1),M(\pi_2)$. Hence $\alpha\beta\pi_1\alpha$ and $\alpha\beta\pi_1\pi_2\alpha$ are two cycles contained  $\alpha\beta$ of length 3 and 4, respectively. Now we consider $A_\alpha$ and $A_{\beta_0}$. Then $\beta\in A_{\beta_0}$. Recall $\beta_0=\sigma^{i_0}\beta$. Since $\Delta(\alpha,\beta)=0$, we have $\Delta(\sigma^{i_0}\alpha,\sigma^{i_0}\beta)=0$ which implies $\alpha_0\beta_0\in \Gamma_4$, where $\alpha_0=\sigma^{i_0}\alpha$. Since $\alpha_0\in A_\alpha$ and $|A_\alpha|=|A_{\beta_0}|=4$, we easily have cycles of length 5 to 7 contained  $\alpha\beta$ by Claim \ref{claim2.3}.
		
		Thus we complete the proof.\qed
		
		\section{Proof of Theorem \ref{main2}}\label{sec3}
		In this section, we will prove $\Gamma^k_n=\Gamma(S_n, D^k_n)$ is edge-pancyclic, where $D^k_n$ is the set of all permutations with exactly $k$ fixed points.
		Let $A_{[n]}^k$ be the $k$-tuples with points in $[n]$. For any $\theta\in A_{[n]}^k$, the notation $\{\theta\}$ is to view $\theta$ as a set. And for any $\sigma_1,\sigma_2\in A_{[n]}^k$, let $\Delta(\sigma_1,\sigma_2)$ be the number of $i\in[k]$ such that $\sigma_1(i)=\sigma_2(i)$.
		
		For any edge $e=\alpha \beta$ in $\Gamma^k_n$, according to the definition of $\Gamma^k_n$, there are exactly $k$ fixed points in $\alpha^{-1}\beta$.
		
		\begin{claim}\label{claim3.1}
		 We can assume the fixed points of $\alpha^{-1}\beta$ are in the position $n-k+1,n-k+2,\dots,n$.
		 \end{claim}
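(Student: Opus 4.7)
The plan is to exhibit a graph automorphism of $\Gamma^k_n$ that simultaneously maps the edge $\alpha\beta$ to a new edge whose endpoints' ``ratio'' has fixed points exactly at positions $n-k+1,\ldots,n$. The natural candidate is the right-multiplication map $\varphi:\theta\mapsto \theta\gamma$ for a suitably chosen $\gamma\in S_n$.

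First, I would check that for \emph{every} $\gamma\in S_n$, the map $\varphi(\theta)=\theta\gamma$ is an automorphism of $\Gamma^k_n$. For any edge $\pi_1\pi_2$, one computes
\[
\varphi(\pi_1)^{-1}\varphi(\pi_2)=(\pi_1\gamma)^{-1}(\pi_2\gamma)=\gamma^{-1}(\pi_1^{-1}\pi_2)\gamma,
\]
which is a conjugate of $\pi_1^{-1}\pi_2$ in $S_n$. Because conjugation preserves cycle type in $S_n$, it preserves the number of fixed points; hence $\pi_1^{-1}\pi_2\in D^k_n$ if and only if $\gamma^{-1}(\pi_1^{-1}\pi_2)\gamma\in D^k_n$. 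Thus $\varphi$ is a bijection on $V(\Gamma^k_n)$ preserving the edge relation.

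Next, I would use the standard observation that if $\pi$ fixes a position $i$, then $\gamma^{-1}\pi\gamma$ fixes position $\gamma^{-1}(i)$, since $\gamma^{-1}\pi\gamma(\gamma^{-1}(i))=\gamma^{-1}\pi(i)=\gamma^{-1}(i)$. Consequently, if the fixed points of $\alpha^{-1}\beta$ are $\{i_1,\ldots,i_k\}\subseteq[n]$, then the fixed points of $\varphi(\alpha)^{-1}\varphi(\beta)=\gamma^{-1}(\alpha^{-1}\beta)\gamma$ are exactly $\{\gamma^{-1}(i_1),\ldots,\gamma^{-1}(i_k)\}$.

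Finally, I would choose any $\gamma\in S_n$ whose inverse sends $\{i_1,\ldots,i_k\}$ bijectively onto $\{n-k+1,\ldots,n\}$ (such a $\gamma$ obviously exists). Since $\varphi$ is a graph automorphism, the edge $\alpha\beta$ lies on a cycle of length $\ell$ in $\Gamma^k_n$ if and only if the edge $\varphi(\alpha)\varphi(\beta)=(\alpha\gamma)(\beta\gamma)$ does. Therefore, after replacing $(\alpha,\beta)$ by $(\alpha\gamma,\beta\gamma)$, we may assume that the fixed points of $\alpha^{-1}\beta$ are at positions $n-k+1,\ldots,n$, as desired. There is no substantial obstacle in this reduction; the only point to verify carefully is that the generating set $D^k_n$ is closed under conjugation, which is immediate because cycle type is a conjugacy invariant.
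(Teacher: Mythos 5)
Your proposal is correct, and it deserves a comparison with the paper's argument because the two differ at exactly the point that matters. The paper also reduces via a translation automorphism of the Cayley graph, but it uses \emph{left} translation $\phi(\theta)=\gamma\theta$ and justifies it by the identity $(\gamma\alpha)^{-1}(\gamma\beta)=\alpha^{-1}\beta$. That identity shows $\phi$ is an automorphism, but it also shows that $\alpha^{-1}\beta$ is literally unchanged, so left translation cannot relocate the set of positions at which $\alpha$ and $\beta$ agree; as written, the paper's proof does not actually achieve the normalization claimed. (Left translation permutes \emph{values}, which is why the analogous Claim~2.2 in Section~2 --- where a value $d$ is renamed to $n$ --- is correctly handled by left multiplication.) You instead use \emph{right} translation $\varphi(\theta)=\theta\gamma$, under which $\varphi(\alpha)^{-1}\varphi(\beta)=\gamma^{-1}(\alpha^{-1}\beta)\gamma$, and you correctly track that the fixed-point set is carried to its image under $\gamma^{-1}$, then choose $\gamma$ so that this image is $\{n-k+1,\dots,n\}$. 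Your verification that $\varphi$ preserves adjacency is also sound: with the paper's convention the edge relation is governed by an element of $D^k_n$, and $D^k_n$ is a union of conjugacy classes since the number of fixed points is a cycle-type invariant (alternatively, right translation preserves $\pi_2\pi_1^{-1}$ outright). In short, your argument is the correct implementation of the reduction the paper intends, and it repairs a genuine slip in the paper's own one-line justification.
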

		
		\noindent{\bf Proof of Claim \ref{claim3.1}.} 
		Suppose the index of the fixed points of $\alpha^{-1}\beta$ is $I=\{i_1,\dots,i_k\}$. Then there exist a permutation $\gamma\in S_n$ such that $\gamma(i_j)=n-k+j$ for $j=1,\dots,k$.
		
		Denote a mapping $\phi~:~ V(\Gamma^k_n)\to V(\Gamma^k_n)$ such that $\phi(\theta)=\gamma\theta$. Since $\alpha^{-1}\beta=\alpha^{-1}\gamma^{-1}\gamma\beta=(\gamma\alpha)^{-1}(\gamma\beta)$, $\varphi$ is an automorphism of $\Gamma_n$ which implies Claim \ref{claim3.1} holds.\q
		
		For every $\eta\in A_{[n]}^k$, we set $A_\eta$ be the collection of all the permutations ended with $\eta$, which implies every $\theta\in A_\eta$, we have $\theta(n-k+j)=\eta(j)$ for every $j=1,\dots,k$. Notice that $\Gamma^k_n[A_\eta]\cong \Gamma_{n-k}$ for every $\eta\in A_{[n]}^k$, every edge in $\Gamma^k_n$ is contained in cycles of each length in $[3,(n-k)!]$.
		
		\begin{claim}\label{claim3.2}
	 For $k\geq 1$ and $n\geq 2k+1$, there exist an order $\left\{\eta_1,\dots,\eta_{k!{n\choose k}}\right\}$ of $A_{[n]}^k$
			such that $\Delta(\eta_i,\eta_{i+1})=0$ and $|\{\eta_i\}\cap\{\eta_{i+1}\}|\geq k-1$ for $i=1,\dots,n!/(n-k)!$.
			\end{claim}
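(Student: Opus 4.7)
My plan is to recast Claim \ref{claim3.2} as asking for a Hamilton cycle in the compatibility graph $H$ on $V(H) = A_{[n]}^k$, where $\eta \sim \eta'$ whenever $\Delta(\eta,\eta') = 0$ and $|\{\eta\} \cap \{\eta'\}| \geq k-1$. I would build this Hamilton cycle in two layers: an outer layer along a Hamilton cycle of $k$-subsets in the Johnson graph, and an inner layer of Hamilton paths through the tuples over each fixed $k$-subset.

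The setup is to partition $A_{[n]}^k = \bigsqcup_{S \in \binom{[n]}{k}} T_S$, where $T_S = \{\eta \in A_{[n]}^k : \{\eta\} = S\}$. Each fiber has $k!$ elements, and $H[T_S]$ is canonically isomorphic to the derangement graph $\Gamma_k$; moreover, any $H$-edge between two distinct fibers $T_S$ and $T_{S'}$ forces $|S \cap S'| = k-1$. Since $n \geq 2k+1$, the Johnson graph $J(n,k)$ is Hamiltonian (a classical result, realizable via the revolving-door algorithm); fix such a Hamilton cycle $S_1 \to S_2 \to \cdots \to S_N \to S_1$ with $N = \binom{n}{k}$.

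For each Johnson edge $S_j S_{j+1}$, I would choose a bridge $H$-edge $\zeta_j \xi_{j+1}$ with $\zeta_j \in T_{S_j}$ and $\xi_{j+1} \in T_{S_{j+1}}$, and then traverse $T_{S_j}$ via a Hamilton path from $\xi_j$ to $\zeta_j$ in $H[T_{S_j}] \cong \Gamma_k$. For $k \geq 4$, such a Hamilton path exists by Hamilton-connectedness of $\Gamma_k$ (Rasmussen--Savage, cited in the introduction). For $k \in \{1,2,3\}$, where either the fiber is trivial or $\Gamma_3 \cong C_6$ is not Hamilton-connected, a direct construction suffices; in the $k=3$ case I would additionally require $\xi_j$ and $\zeta_j$ to be adjacent in the $C_6$. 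Concatenating the $N$ inner Hamilton paths with the $N$ bridge edges then produces the desired cyclic ordering.

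The main obstacle is coupling the $2N$ bridge-endpoint choices: for each $j$, the endpoints $\xi_j, \zeta_j$ must be distinct (and adjacent when $k=3$), while $\zeta_j$ and $\xi_{j+1}$ must together form a legal $H$-edge bridging $T_{S_j}$ and $T_{S_{j+1}}$. I expect this to be handled by a local counting argument: on each Johnson edge, the number of legal bridge pairs is on the order of $k \cdot D_{k-1}$ (choose the position of the exchanged element, then derange the rest), leaving enough slack to pick compatible endpoints greedily around the cycle. The small-$k$ cases and the degenerate Hamilton-connectivity of $\Gamma_3$ I would treat by explicit ad hoc construction, exploiting the flexibility afforded by $n \geq 2k+1$.
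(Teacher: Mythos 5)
Your two--layer decomposition is essentially the paper's own proof: the paper first orders $\binom{[n]}{k}$ so that consecutive sets meet in $k-1$ elements (it builds this revolving--door ordering by an explicit induction rather than citing Hamiltonicity of the Johnson graph), then runs a Hamilton cycle of $\Gamma_k$ inside each fiber $T_S$, and splices consecutive fibers with bridge edges. The one place you genuinely diverge is the bridge mechanism, and it is exactly the place you flag as ``the main obstacle.'' The paper dissolves the coupling problem entirely: from whatever vertex $\eta$ the inner Hamilton path happens to end at, it forms $\tilde\eta$ by substituting the incoming element $b$ for the outgoing element $a$ \emph{in the same position}, and then applies a cyclic shift $\sigma\in C_k$ of the positions; one checks $\Delta(\eta,\sigma(\tilde\eta))=0$ (off the substituted position the entries of $\sigma(\tilde\eta)$ are entries of $\eta$ moved to new positions, and at the substituted position $\eta$ reads $a\notin\{\tilde\eta\}$), while $|\{\eta\}\cap\{\sigma(\tilde\eta)\}|=k-1$. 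Since this produces a legal bridge out of \emph{any} exit vertex, the paper needs only a Hamilton cycle in each fiber --- enter wherever the previous bridge lands, walk the cycle, exit from the last vertex --- and neither Hamilton-connectedness of $\Gamma_k$ nor your greedy endpoint-coupling argument is required. Your route is workable for $k\ge 4$, but you would still have to carry out the counting/greedy step you only sketch, whereas the paper's substitution-plus-shift trick makes it unnecessary.

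The concrete gap is your treatment of $k=3$. You assert $\Gamma_3\cong C_6$; in fact $\Gamma_3\cong 2K_3$ (its complement is $K_{3,3}$, as in Claim \ref{claim2.4}), so each fiber is \emph{disconnected} and admits no Hamilton path at all. Your proposed fix --- requiring the entry and exit vertices to be adjacent ``in the $C_6$'' --- rests on a false premise, and more fundamentally no strategy that exhausts a fiber before leaving it can work for $k=3$: the ordering must shuttle between fibers to stitch the two triangles together. (In fairness, the paper's own proof has the same blind spot, invoking vertex-pancyclicity of $\Gamma_k$ for all $k\ge 1$, which fails for $k=2,3$; the case $k=2$ is harmless since $\Gamma_2=K_2$ has a Hamilton path, but $k=3$ requires a genuinely different local argument in either write-up.)
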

		
		\noindent{\bf Proof of Claim \ref{claim3.2}.}
		First, we can order ${n\choose k}$ sets of ${[n]\choose k}=\{\gamma_1,\gamma_2,\dots,\gamma_{{n\choose k}}\}$ such that $|\gamma_i\cap\gamma_{i+1}|=k-1$ for $i=1,\dots,{n\choose k}-1$. This can be proved by induction.
		Actually, we can prove a stronger result, which also requires that $\gamma_1=\{1,\dots,k\}$ and $\gamma_{n\choose k}=\{n-k+1,\dots,n\}$. When $k=1,2$, it is easy to check. When $k\geq 3$, by induction hypothesis, there exist an order $\tau_{1},\tau_2,\dots, \tau_{n-1\choose k-1}$ of the set ${[n]\setminus\{1\}\choose k-1}$, with $\tau_1=\{2,3,\dots,k\}$, $\tau_{n-1\choose k-1}=\{k-n+2,\dots,n\}$ and $|\tau_i\cap\tau_{i+1}|=k-1$ for $i=1,\dots,{n-1\choose k-1}$. Let $\gamma_i=\{1\}\cup\tau_i$ for $i=1,\dots,{n-1\choose k-1}$ and $\gamma_{{n-1\choose k-1}+1}=\tau_{n-1\choose k-1}\cup\{2\}$. 
		
		By induction hypothesis and the symmetry, there exists an order $\tau_1',\dots,\tau_{n-2\choose k-1}'$ or $\{[n]\setminus\{1,2\}\}$ such that $\tau_1'=\{n-k+1,\dots,n\}$, $\tau_{n-2\choose k-1}'=\{3,4,\dots,k+1\}$ with $|\tau_i'\cap\tau_{i+1}'|=k-1$ for $i=1,\dots,{n-2\choose k-1}$. Let $\gamma_{{n-1\choose k-1}+i}=\tau'_i\cup\{2\}$ for $i=1,\dots,{n-2\choose k-1}$. Repeat this process until we have $\gamma_{n\choose k}=\{n-k+1,\dots,n\}$. Then we find the order of ${[n]\choose k}$ we want.
		
		Since $\Gamma_k$ is vertex-pancyclic, there exists an order of $\eta_1,\dots,\eta_{k!}$ such $\{\eta_j\}=\gamma_1$ for $j=1,\dots,k!$, and $\Delta(\eta_i,\eta_{i+1})=0$ for $i=1,\dots,k!$.
		
		Now suppose $a_1=\gamma_1\setminus\gamma_2$ and $b_1=\gamma_2\setminus\gamma_1$. Let $\tilde{\eta_{k!}}$ be the $k$-tuple that replace $a_1$ with $b_1$, which implies $\{\tilde{\eta_{k!}}\}=\gamma_2$. Then we have $\Delta(\eta_{k!},\sigma(\tilde{\eta_{k!}}))=0$ where $\sigma\in C_k$ is a cyclic permutation. Repeat this process ${n\choose k}$ times and we will find the $\eta_1,\dots,\eta_{n!/(n-k)!}$ we want.
		\q
		
		Notice that in the proof of Claim \ref{claim3.2}, we suppose $\{\eta_1\}=\{1,2,\dots,k\}$, but according to the symmetry, we can suppose $\eta_1$ be any $k$-tuple in $A_{[n]}^k$.
		
		\noindent{\bf Proof of Theorem \ref{main2}.}
		According to Claim \ref{claim3.1}, we may assume the edge $\alpha\beta\in E(\Gamma^k_n[A_{\eta_1}])$, and $\eta_1,\eta_2,\dots,\eta_{n!/(n-k)!}$ are as in Claim \ref{claim3.2}. Then for $3\leq \ell \leq (n-k)!$, there is a cycle $C^\ell$ of length $\ell$ containing $\alpha\beta$. Let $(\epsilon_1,\eta_1)(\tau_1,\eta_1)\in E(C^\ell)$ different from $\alpha\beta$, where $\Delta(\epsilon_1,\tau_1)=0$.
		
		If $\{\eta_1\}=\{\eta_2\}$, let $\pi\in S_{n-k}$ with exactly $k$ fixed points, then $\Delta(\pi(\epsilon_1),\pi(\tau_1))=0$ and $(\epsilon_1,\eta_1)(\pi(\epsilon_1),\eta_2)(\pi(\tau_1),\eta_2)(\tau_1,\eta_1)(\epsilon_1,\eta_1)$ is a cycle of length $4$. Moreover, $(\pi(\epsilon_1),\eta_2)(\pi(\tau_1)\in E(\Gamma^k_n[A_{\eta_2}])$. Thus, for $3\leq\ell \leq (n-k)!$, there exists a cycle of length $\ell$ containing $(\pi(\epsilon_1),\eta_2)(\pi(\tau_1),\eta_2)$. By adding the edges $(\epsilon_1,\eta_1)(\pi(\epsilon_1),\eta_2),(\pi(\tau_1),\eta_2)(\tau_1,\eta_1) $ and deleting the edges $(\epsilon_1,\eta_1)(\tau_1,\eta_1),(\pi(\epsilon_1),\eta_2)(\pi(\tau_1),\eta_2)$, we can integrate two cycles in $\Gamma_n^k[A_{\eta_1}]$ and $\Gamma_n^k[A_{\eta_2}]$ respectively into one longer cycle. Thus $\alpha\beta$ lies in cycles of each length in $[3,2(n-k)!]$.
		
		If $|\{\eta_1\}\cap\{\eta_2\}|=k-1$, suppose $a_1=\{\eta_1\}\setminus\{\eta_2\}$ and $b_1=\{\eta_2\}\setminus\{\eta_1\}$. Let $\tilde{\epsilon_1}$ be the $k$-tuple replace the point $b_1$ with $a_1$, and $\tilde{\tau_1}$ be the $k$-tuple replace the point $a_1$ with $b_1$. Let $\pi\in S_{n-k}$ be a permutation such that $\Delta(\epsilon_1,\pi(\tilde{\epsilon_1}))=k$. Then $\Delta(\tau_1,\pi(\tilde{\tau_1}))=k$ and $\Delta(\pi(\tilde{\epsilon_1}),\pi(\tilde{\tau_1}))=0$. Since $n\geq 2k+1$, such $\pi$ is exist. Thus $(\epsilon_1,\eta_1)(\pi(\tilde{\epsilon_1}),\eta_2)(\pi(\tilde{\tau_1}),\eta_2)(\tau_1,\eta_1)(\epsilon_1,\eta_1)$ is a cycle of length $4$. We can similarly prove that $\alpha\beta$ is contained in cycles of each length in $[3,2(n-k)!]$.
		
		For any edge in $E(\Gamma_n^k[A_{\eta_2}])$ different from $(\pi(\epsilon_1),\eta_2)(\pi(\tau_1),\eta_2)$, we can repeat the above process and find a cycle of length $4$ between $\Gamma_n^k[A_{\eta_2}]$ and $\Gamma_n^k[A_{\eta_3}]$. Then we will prove $\alpha\beta$ contained in cycles of each length in $[3,3(n-k)!]$. Repeat this process we can prove $\alpha\beta$ is contained in cycles of each length in $[3,n!]$, which implies the result holds.
		\qed
		\begin{figure}[h]
			\centering
			\includegraphics[width=0.8\linewidth]{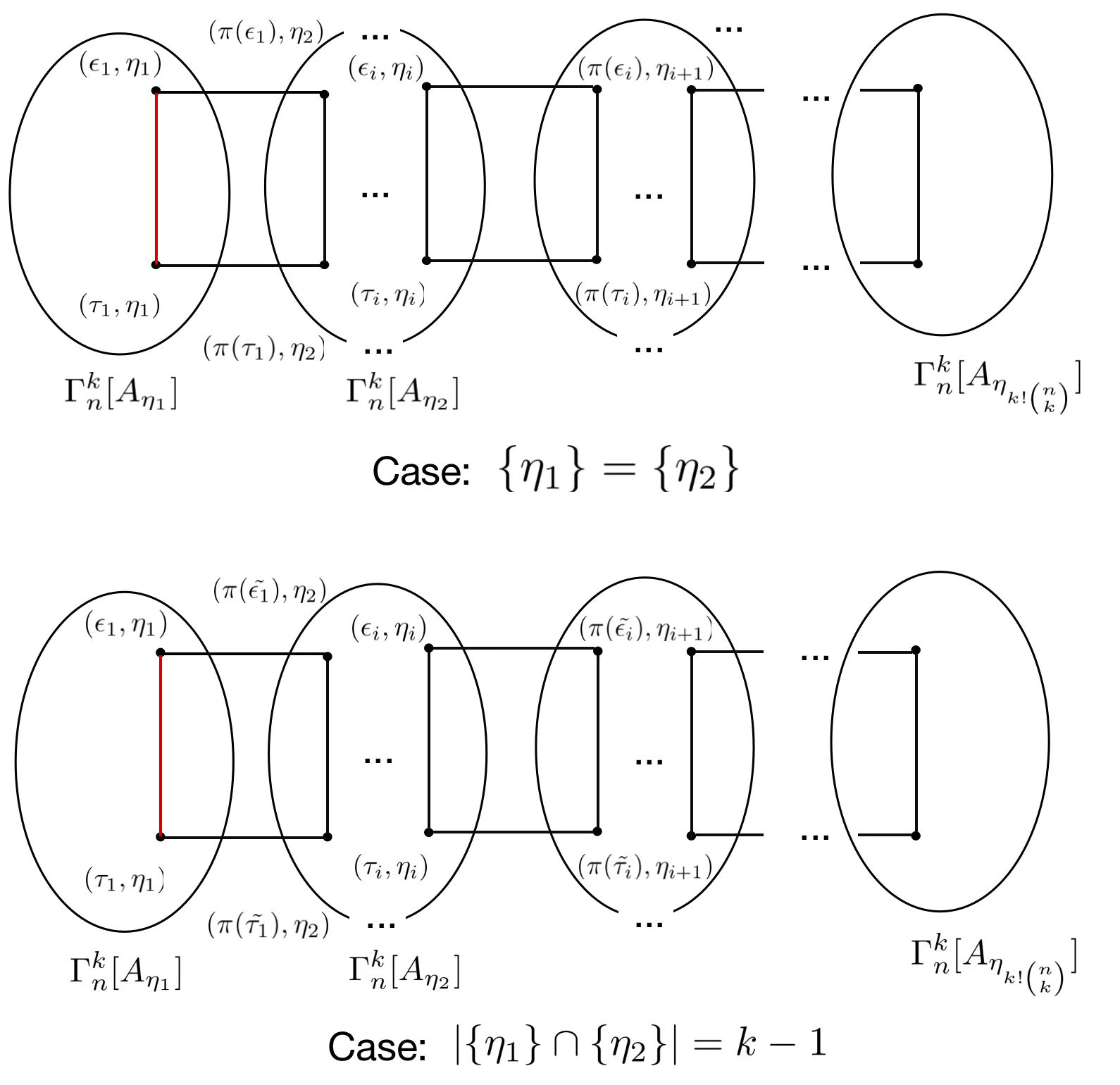}
			\caption{The Construction of cycles in $\Gamma_n^k$}
			\label{fig:enter-label}
		\end{figure}
		\section{Proof of Theorem \ref{main3}}\label{sec4}
		We will prove Theorem \ref{main3} by induction on $n$. When $n=k$, $G^k_n\cong\Gamma_k$, which is edge-pancyclic, is established.
		Now consider the case when $n>k$. Fix any point $i\in[n]$, let $E_i$ denote the collection of $k$-tuples in $A_{[n]}^k$ that contains $i$, and $F_i$ denote the collection of $k$-tuples in ${[n]\choose k}$ that do not contain $i$. Then set $H^i_1:=G^k_n[E_i]$ and $H^i_2:=G^k_n[F_i]$, we have that $H^i_2\cong G_{n-1}^k$. By induction hypothesis, we have that $G_i^2$ is edge-pancyclic.
		
		Then we will prove that $H^i_1$ is also edge-pancyclic with a similar method as the proof of Theorem \ref{main1}.
		
		In this section, $C_k$ denotes the set of cyclic permutations of $S_k$. And for any $\sigma_1,\sigma_2\in A_{[n]}^k$, let $\Delta(\sigma_1,\sigma_2)$ be the number of $i\in[k]$ such that $\sigma_1(i)=\sigma_2(i)$. We have the following claim.
		
		\begin{claim}\label{claim4.1}
		For any $\alpha,\beta\in A_{[n]}^k$ and $\sigma\in C_n$, we have
			$$\sum_{i=1}^k\Delta(\alpha,\sigma^i\beta)=|\{\alpha\}\cap\{\beta\}|.$$ 
			\end{claim}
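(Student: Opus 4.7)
The identity is the natural analog of Claim~\ref{claim2.1} adapted to the tuple setting $A_{[n]}^k$, and I would establish it by the same double-counting idea used there. Expanding the definition of $\Delta$ and swapping the order of summation rewrites the left-hand side as
$$\sum_{i=1}^{k}\Delta(\alpha,\sigma^{i}\beta)=\sum_{j=1}^{k}\#\{\,i\in\{1,\ldots,k\}:\sigma^{i}(\beta(j))=\alpha(j)\,\},$$
so the task reduces to counting, for each position $j\in[k]$, the number of powers $\sigma^{i}$ in the prescribed range that send the value $\beta(j)$ to the value $\alpha(j)$.

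For each fixed $j$, the key structural fact is that $\sigma$ is a single cycle, so the iterates $\sigma(\beta(j)),\sigma^{2}(\beta(j)),\ldots$ visit distinct elements of $[n]$ before returning. I would then argue that within the index range $\{1,\ldots,k\}$ there is exactly one $i$ with $\sigma^{i}(\beta(j))=\alpha(j)$ precisely when $\alpha(j)$ belongs to the set $\{\beta\}$, and no such $i$ otherwise. Summing over $j\in[k]$ and using that $\alpha$ is injective (so that each value in $\{\alpha\}\cap\{\beta\}$ contributes from exactly one position) gives
$$\sum_{j=1}^{k}\mathbf{1}\bigl[\alpha(j)\in\{\beta\}\bigr]=\bigl|\{\alpha\}\cap\{\beta\}\bigr|,$$
which is the desired right-hand side.

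The main obstacle, and the step requiring the most care, is pinning down the equivalence "a witnessing shift $i\in\{1,\ldots,k\}$ exists $\Longleftrightarrow$ $\alpha(j)\in\{\beta\}$". This is a bookkeeping statement about how the powers of the cycle $\sigma$ interact with the $k$-element image $\{\beta\}\subseteq[n]$, and it is where the range of summation $\{1,\ldots,k\}$ and the cycle structure of $\sigma$ must be reconciled. Once this bijective correspondence between admissible shifts and elements of $\{\alpha\}\cap\{\beta\}$ is set up, the double-count closes and the identity follows.
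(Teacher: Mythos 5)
Your double-counting skeleton matches the paper's, but the argument does not close because you have fixed the wrong action of $\sigma$, and the step you explicitly defer (``a witnessing shift $i\in\{1,\ldots,k\}$ exists $\Longleftrightarrow$ $\alpha(j)\in\{\beta\}$'') is exactly where this matters: under your reading that equivalence, and indeed the claim itself, is false. You take $\sigma$ to be an $n$-cycle acting on values, so that the $j$-th entry of $\sigma^i\beta$ is $\sigma^i(\beta(j))$. Then for fixed $j$ the set $\{\sigma^i(\beta(j)) : i=1,\ldots,k\}$ consists of the $k$ successors of $\beta(j)$ along the cycle $\sigma$; this is an arbitrary $k$-subset of $[n]$ determined by where $\beta(j)$ sits in $\sigma$, and it has no relation to the image set $\{\beta\}$. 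Concretely, take $n=5$, $k=4$, $\sigma=(1\,2\,3\,4\,5)$ and $\alpha=\beta=(1,2,3,4)$: each of $\sigma\beta=(2,3,4,5)$, $\sigma^2\beta=(3,4,5,1)$, $\sigma^3\beta=(4,5,1,2)$, $\sigma^4\beta=(5,1,2,3)$ disagrees with $\alpha$ in every coordinate, so the left-hand side is $0$ while $|\{\alpha\}\cap\{\beta\}|=4$. No bookkeeping can reconcile the summation range $\{1,\ldots,k\}$ with an $n$-cycle on values.

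The identity is true only when $\sigma$ is a $k$-cycle permuting the \emph{coordinates} of the tuple (the ``$C_n$'' in the claim statement is a slip for $C_k$, which is what the opening of Section~\ref{sec4} announces and what the definition of $A_\tau$ and Claim~\ref{claim4.3} require: the members of $A_\tau$ must all have the same underlying set as $\tau$). Under that reading the $k$ tuples $\sigma^i\beta$, $i=1,\ldots,k$, are the $k$ cyclic shifts of $\beta$, so for each fixed position $a\in[k]$ the values $\sigma^i\beta(a)$ enumerate the $k$ distinct elements of $\{\beta\}$ exactly once each. Hence your inner count equals $1$ precisely when $\alpha(a)\in\{\beta\}$ and $0$ otherwise, and summing over $a$ and using the injectivity of $\alpha$ (as you do) yields $|\{\alpha\}\cap\{\beta\}|$. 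This identification of the action is the entire content of the claim and is the one-line observation the paper makes; without it your proposal reduces to an unproven, and in your formulation false, assertion.
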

		
		\noindent{\bf Proof of Claim \ref{claim4.1}.}
		Note that for any $a\in[k]$ and $b\in\{\alpha\}\cap\{\beta\}$, there is only one $i\in\{0,1,\dots,k-1\}$ such that $\sigma^i\beta(a)=b$, which implies the result holds.
		\q
		
		\begin{lemma}\label{lem2}
			When $n>k\geq 4$, the graph $H^i_1$ is edge-pancyclic for $i=1,2,\dots,n$.
		\end{lemma}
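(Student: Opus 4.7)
The plan is to mimic the proof of Theorem~\ref{main1} for the graph $H^i_1$. First, I partition $E_i$ into cliques using the right-multiplication action of a $k$-cycle $\sigma\in C_k$: for $\tau\in E_i$, set $A_\tau=\{\tau\sigma^{-j}:0\le j\le k-1\}$. Since each $\tau:[k]\to[n]$ is injective, the argument of Claim~\ref{claim2.3} (with injectivity of $k$-tuples replacing the derangement property) shows that $A_\tau$ has size $k$ and induces a copy of $K_k$ in $H^i_1$, and that the orbits partition $E_i$. Canonical representatives are $T^i=\{\tau\in E_i:\tau(1)=i\}$, of size $(n-1)!/(n-k)!$; these split into $\binom{n-1}{k-1}$ classes according to the underlying set $\{\tau\}$, each class of size $(k-1)!$.

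Next, define the block graph $B$ on $T^i$ by $\tau_1\sim\tau_2$ iff some inter-clique edge of $H^i_1$ joins $A_{\tau_1}$ and $A_{\tau_2}$. By Claim~\ref{claim4.1}, whenever $\{\tau_1\}\ne\{\tau_2\}$ we have $\sum_{c=0}^{k-1}\Delta(\tau_1,\tau_2\sigma^c)=|\{\tau_1\}\cap\{\tau_2\}|<k$, so some shift yields $\Delta=0$ and hence an inter-edge. Consequently $B$ contains the complete multipartite graph $K_{(k-1)!,\ldots,(k-1)!}$ on $\binom{n-1}{k-1}\ge k\ge 4$ parts of size $(k-1)!\ge 6$; its minimum degree $(\binom{n-1}{k-1}-1)(k-1)!$ easily exceeds $(|T^i|+2)/2$, so Lemma~\ref{lem1} gives edge-pancyclicity of this multipartite subgraph. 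It plays the role of $\overline{\Gamma_{n-1}}$ in Claim~\ref{claim2.4}.

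For any edge $\alpha\beta\in E(H^i_1)$ with $\{\alpha\}\ne\{\beta\}$, the representatives $\tau_\alpha,\tau_\beta\in T^i$ lie in different multipartite parts and $\tau_\alpha\tau_\beta$ is a multipartite edge of $B$. For each desired length $m\ge 3$, edge-pancyclicity produces a cycle $\tau_1\tau_2\cdots\tau_m\tau_1$ in this subgraph with $\tau_1=\tau_\alpha$ and $\tau_2=\tau_\beta$. I lift it to $H^i_1$ exactly as in Theorem~\ref{main1}: between consecutive cliques $A_{\tau_j},A_{\tau_{j+1}}$ I pick an inter-edge $\tau_j\theta_{j+1}$ via Claim~\ref{claim4.1}, and within each clique I traverse a path of freely chosen length $j_s\in[1,k-1]$ inside $K_k$. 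This produces cycles of length $m+\sum_s j_s\in[2m,mk]$; since $[2m,mk]$ and $[2(m+1),(m+1)k]$ overlap whenever $k\ge 4$ (as $m(k-2)\ge 2$), cycles of every length from $6$ up to $|E_i|$ arise.

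The main obstacles are the remaining boundary cases: short cycles of length $3,4,5$; the within-clique case $\alpha,\beta\in A_\tau$; and the same-underlying-set case $\{\alpha\}=\{\beta\}$. Short cycles are constructed directly by exhibiting common neighbors of $\alpha,\beta$ in $E_i$ via a Hall-type argument --- each coordinate has $n-2\ge k-1\ge 3$ admissible values, and $i$ can be placed at one of the $k-2$ positions where both $\alpha$ and $\beta$ differ from $i$. For the within-clique case, $A_\tau\cong K_k$ admits a path of any length $1\le\ell\le k-1$ through the edge $\alpha\beta$, handling cycles of length $3$ to $k$ inside $A_\tau$; longer cycles are obtained by exiting $A_\tau$ along an inter-edge and returning via the block-lift construction. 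For the same-underlying-set case, Theorem~\ref{main1} applied to the subgraph of $H^i_1$ induced on orderings of $\{\alpha\}$ (isomorphic to $\Gamma_k$) provides cycles of length up to $k!$, and longer cycles are obtained by detouring through a clique in a different multipartite part of $B$.
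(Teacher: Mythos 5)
Your overall skeleton coincides with the paper's: decompose $E_i$ into $K_k$-cliques given by the cyclic-shift orbits, pass to a quotient graph on the representatives, apply the minimum-degree criterion of Lemma~\ref{lem1} to the quotient, and lift quotient cycles back by inserting intra-clique paths of freely chosen length, with lengths $3,4,5$ done by hand. For edges $\alpha\beta$ with $\{\alpha\}\neq\{\beta\}$ your argument is sound and matches the paper's (your multipartite degree count checks out, since $\binom{n-1}{k-1}\ge k\ge 4$ and $(k-1)!\ge 6$).

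The genuine gap is in the case $\{\alpha\}=\{\beta\}$ (including the sub-case where $\beta$ is a cyclic shift of $\alpha$). Because your block graph retains only the complete multipartite edges --- pairs of representatives with \emph{different} underlying sets --- the blocks of $\alpha$ and $\beta$ lie in the same part, there is no quotient edge to anchor the lift on, and your main construction does not apply. Your fallback covers lengths $3$ through $k!$ via $\Gamma_k$ and Theorem~\ref{main1}, but the remaining range $[k!+1,\,k\binom{n-1}{k-1}(k-1)!]$ is dismissed with ``detouring through a clique in a different multipartite part'': a single detour through one clique adds at most $k$ to the length, so realizing \emph{every} intermediate length requires organizing detours through arbitrarily many cliques while keeping the fixed edge $\alpha\beta$ on the cycle and guaranteeing the $\Delta=0$ connections at the specific path endpoints --- this is essentially the entire difficulty of the lemma and is not established. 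The paper avoids this case split altogether by enlarging the quotient graph $\tilde{G_1}$ to include, in addition to the multipartite edges, edges between same-set representatives whose truncations agree in at least one position (equivalently, whose full $k$-tuples agree in at least two positions); by Claim~\ref{claim4.1} such agreement still forces a $\Delta=0$ shift in the other clique, so these extra quotient edges also lift, the minimum-degree bound still holds for this richer graph, and the single lift argument then handles both cases uniformly (after replacing $\beta$ by a suitable shift $\beta_0$ with $\Delta(\alpha,\beta_0)\ge 2$, exactly as in Theorem~\ref{main1}). To repair your proof you would either need to add these within-part quotient edges, or supply a complete splicing argument for the same-set case.
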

		\begin{proof}
		For convenience, we suppose $i=n$. For any edge $\alpha\beta$ in $H^n_1$, we have $n\in \{\alpha\}\cap \{\beta\}$ by the definition of $H^n_1$. If $\{\alpha\}=\{\beta\}$, then as we explained in Section 2, there exist $\beta_0=\sigma^{i_0}\beta$ for some $i_0\in [k]$ such that $\Delta(\alpha,\beta_0)\geq 2$, and we may assume $\alpha(k)=\beta_0(k)=n$.
		If $\{\alpha\}\neq \{\beta\}$, we may assume $\alpha(k)=n$ and then there exist $\beta_0=\sigma^{i_0}\beta$ for some $i_0\in[k]$ such that $\beta_0(k)=n$ and $\Delta(\alpha,\beta_0)\geq 1$.
		
		Denote $T=\{\tau\in A_{[n]}^k~|~ \tau(k)=n\}$, then we have $\alpha,\beta_0\in T$. For any $\tau\in T$, let $A_\tau=\{\tau,\sigma\tau,\sigma^2\tau,\dots,\sigma^{k-1}\tau\}$. Then $\beta\in A_{\beta_0}$. Similarly to Claim \ref{claim2.3} in Section \ref{sec2}, we have the following claim.
		
		\begin{claim}\label{claim4.3}
		 $A_n=\cup_{\tau\in T}A_\tau$ and $G^k_n[A_\tau]\cong K_k$ for any $\tau\in T$.
		\end{claim}
		
		Now we construct a new graph $\tilde{G_1}$ with vertex set as $A_{[n-1]}^{k-1}$ and two vertices $\sigma_1,\sigma_2$ are adjacent if $\{\sigma_1\}\neq \{\sigma_2\}$ or if $\Delta(\sigma_1,\sigma_2)\geq 2.$
		
		\begin{claim}\label{claim4.4}
		If $n> k\geq 4$, then $\tilde{G_1}$ is edge-pancyclic.
		\end{claim}
		
		\noindent{\bf Proof of Claim \ref{claim4.4}.} 
		
		Note that $\overline{\tilde{G}}$ is $|D_{k-1}|$-regular, then
		\begin{align*}
			\delta(\tilde{G})=(n-1)!/(n-k)!-(k-1)!\sum_{i=0}^{k-1}\frac{(-1)^i}{i!}\geq& (n-1)!/(n-k)!\sum_{i=1}^{k-1}\frac{(-1)^{i-1}}{i!}-1\\
			\geq& \frac{(n-1)!/(n-k)!}{2}+1
		\end{align*}		
		if $n> k\geq 4$. Thus $\tilde{G_1}$ is edge-pancyclic by Lemma \ref{lem1}.
		\q
		
		Then we will finish the proof of Lemma \ref{lem2}. Let $\tau\in T$, then $\tau(k)=n$. Denote $\hat{\tau}=\tau(1)\dots \tau(k-1)$, and $\hat{T}=\{\hat{\tau}~|~\tau\in T\}$. Since $\alpha,\beta_0\in T$ and $\Delta(\alpha,\beta_0)\geq 2, \Delta(\hat{\alpha},\hat{\beta_0})\geq 1$ or $\{\hat{\alpha}\}\neq \{\hat{\beta_0}\}$,  $\hat{\alpha}\hat{\beta_0}\in E(\tilde{G_1})$. By Claim \ref{claim4.4}, for any integer $\ell\in [3,(n-1)!/(n-k)!]$, there exist $\hat{\tau_1}(=\hat{\alpha}),\hat{\tau_2}(=\hat{\beta_0})\dots,\tau_\ell\in \hat{T}$ that construct a cycle in $\tilde{G_1}$ as $\hat{\tau_1}\dots\hat{\tau_{\ell}}\hat{\tau_1}$. If $\Delta(\hat{\tau_i},\hat{\tau_{i+1}})\geq 1$, then $\Delta(\tau_i,\tau_{i+1})\geq 2$ and by Claim \ref{claim4.1}, there exist $\theta_{i+1}\in A_{\tau_{i+1}}$ such that $\Delta(\tau_i,\theta_{i+1})=0$ which implies $\tau_i\theta_{i+1}\in E(H^n_1)$. If $\{\hat{\tau_i}\}\neq \{\hat{\tau_{i+1}}\},$
		then $|\{\tau_i\}\cap\{\tau_{i+1}\}|\leq k-1$, by Claim \ref{claim4.1}, there exist $\theta_{i+1}\in A_{\tau_{i+1}}$ such that $\Delta(\tau_i,\theta_{i+1})=0$ which implies $\tau_i\theta_{i+1}\in E(H^n_1)$. 
		
		Recall that $\tau_1=\alpha,\tau_2=\beta$, $\beta\in A_{\beta}$ and $\Delta(\alpha,\beta)=0$, we can similarly find the cycle of length $\ell+\sum_{s=1}^\ell j_s$ containing the edge $\alpha\beta$, where $1\leq j_s\leq k-1$ for all $1\leq s\leq \ell$. And then $\alpha\beta$ is contained in cycles of each length in $[6,k(n-1)!/(n-k)!]$. Since $|V(H^n_1)|=k(n-1)!/(n-k)!$, it left to proof $\alpha\beta$ in the cycles of each length in $[3,5]$. This can be similarly proved as the proof of Theorem \ref{main1}.
		\end{proof}
		
		\noindent{\bf Proof of Theorem \ref{main3}.}
		Now for any fixed edge $\alpha\beta$, we will prove $\alpha\beta$ is contained in cycles of each length in $[3,n!/(n-k)!]$. We finish the proof by considering the following two cases.
		
		\noindent{\bf Case 1.}
		$\{\alpha\}=\{\beta\}$.
		
		Then we may assume $\{\alpha\}=[k]$. Thus the edge $\alpha\beta\in E(H^k_1)$. According to Lemma \ref{lem2}, $\alpha\beta$ is contained in cycles of each length in $[3,k(n-1)!/(n-k)!]$. For a cycle with length $\ell$ containing $\alpha\beta$ which is denoted as $C^\ell$, choose an edge $e=\sigma_1\sigma_2\in E(C^\ell)$  that is distinct from $\alpha\beta$. Let $\tilde{\sigma_i}$ be the permutation in $A_{[n]}^k$ that replace the point $k\in\sigma_i$ with $k+1$ for $i=1,2$.
		
		We have $\sigma(\tilde{\sigma_1})\sigma(\tilde{\sigma_2})\in E(H^k_2)$ and $\sigma_i\sigma(\tilde{\sigma_i})\in E(G^k_n)$ for $i=1,2$, where $\sigma\in C_k$ is a cyclic permutation in $S_k$. Since $H_2^k\cong G_{n-1}^k$ and by induction hypothesis, the edge $\sigma(\tilde{\sigma_1})\sigma(\tilde{\sigma_2})$ is contained in cycles of each length in $[3,(n-1)!/(n-1-k)!]$. By deleting the edges $\sigma_1\sigma_2,\sigma(\tilde{\sigma_1})\sigma(\tilde{\sigma_2})$ and adding the edges $\sigma_i\sigma(\tilde{\sigma_i})$, $i=1,2$, we can integrate two cycles contained in $H_1^k$ and $H_2^k$ respectively into one longer cycles. Thus the edge $\alpha\beta$ is contained in cycles of each length in $[3,n!/(n-k)!]$.
		
		\noindent{\bf Case 2.}
		$\{\alpha\}\neq\{\beta\}$.
		
		We may assume $k\in\{\alpha\}\setminus\{\beta\}$. Thus the edge $\alpha\sigma(\alpha)\in E(H_1^k)$ and $\beta\sigma(\beta)\in E(H^k_2)$, moreover, $\sigma(\alpha)\sigma(\beta)\in E(G^k_n)$. By Lemma \ref{lem2} and the induction hypothesis that $H_2^k$ is edge-pancyclic, the edge $\alpha\sigma(\alpha)$ is contained in cycles of each length in $[3,k(n-1)!/(n-k)!]$, and the edge $\beta\sigma(\beta)$ is contained in cycles of each length in $[3,(n-1)!/(n-1-k)!]$. 
		
		Then by deleting the edges $\alpha\sigma(\alpha)$, $\beta\sigma(\beta)$ and adding the edges $\alpha\beta$, $\sigma(\alpha)\sigma(\beta)$, we can  integrate two cycles in $H_1^k$ and $H_2^k$ respectively into one longer cycle containing $\alpha\beta$. Thus the edge $\alpha\beta$ is contained in cycles of each length in $[4,n!/(n-k)!]$. With a similar proof as in Theorem \ref{main1}, the edge $\alpha\beta$ is contained in a $C_3$. We have finished the proof. \qed

		\section*{Acknowledgement}
		M. Cao is supported by the National Natural Science Foundation of China (12301431). M. Lu is supported by the National Natural Science Foundation of China (12171272).
		
		\section*{Declaration of competing interest}
The authors declare that they have no known competing financial interests or personal relationships that could have appeared to influence the work reported in this paper.

	\section*{Data availability}
No data was used for the research described in the article.

		\vskip.2cm
		
		\vskip.2cm
		
	\end{document}